\def\endjump{\relax}
\long\def\jump#1\endjump{\relax}
\newif\ifdraft\draftfalse
\def\mactex{\advance\hoffset by .5in}
\font\crw=cmss10
\font\cyrr=wncyss10
\def\lmonth{\ifcase \month \or January \or February\or March
  \or April\or May\or June\or July\or August\or September\or
  October\or November\or December\fi}
\def\today{\the\day\ \lmonth\ \the\year}
\def\endchap{\vfil\supereject\relax\immediate\write16{Ending chapter \the\cno}}
\long\def\newchap{\vfil\supereject\ifodd\pageno\else{{
\headline{}}}\fi
\resetchap\advance\cno by
1\immediate\write16{Beginning chapter \the\cno}}
\def\lowern{{\kern.2pt \rm{{\cyrr I}}}}
\def\inv{^{-1}}
\def\db<{\mathrel{<\!<}}
\def\dbk#1<{\mathrel{<\!<_{\K_{#1}}}}
\def\dbkp<{\mathrel{<\!<_{\K_p}}}
\def\kp<{\mathrel{<_{\K_p}}}
\def\k#1<{\mathrel{<_{\K_{#1}}}}
\def\nor{\mathrel{\triangleleft}}
\def\chr{\mathrel{\rm{char}}}
\def\crownstar{\mathop{\rm{\raise2truept
\def\epsq{{\epsilon(q)\!}}
\hbox{{\crw Y}}\kern-5.8truept\lower6.2truept\hbox{*\hskip1.96truept}}}}
\def\ov{\overline}
\def\wt{\widetilde}
\def\FM#1{{\cal F\cal M9}}
\def\F{{\bf F}}
\def\chev{Chev}
\def\td4#1{{^3}\!D_4(#1)}
\def\epsq{{\epsilon(q)\!}}
\def\cal{\Cal}
\def\gp<#1>{\left\langle#1\right\rangle}
\def\<#1>{\gp<#1>}
\def\itemitemitem"#1"{{\advance\parindent by 10pt
\itemitem"#1"\advance\parindent by -10pt}}
\def\|#1:#2|{|#1\!:\!#2|}
\def\Inn{\mathop{\rm {Inn}}}
\def\Out{{\rmstyle\mathop{\rm {Out}}}}
\def\Aut{{\mathop{\rm {Aut}}}}
\def\aut{{\rm{Aut}}}
\def\Syl{{\mathop{\rm {Syl}}}}
\def\ndiv{\mathrel{\hbox{$\vert$\kern-2.5pt$\prime$}}}
\def\ch#1/#2/{{{{#1}\choose{#2}}}}
\newif\ifmargnotes\margnotesfalse
\def\marginalnotes{\margnotestrue
\gdef\std{\dp\strutbox}
\gdef\xmrg##1{\strut\vadjust{\kern-\std\vtop to
\std{\baselineskip\std \vss\llap{{\eightpoint ##1\ \ \ }}\null}}}
\gdef\ymrg##1{\strut\vadjust{\kern-\std\vtop to
\std{\baselineskip\std \vss\rlap{{\hskip\hsize\ \ \ \ \ \ \eightpoint ##1}}\null}}}
\gdef\zmrg##1{\strut\vadjust{\kern-\std\vtop to
\std{\baselineskip\std \vss\rlap{{\hskip\hsize\hskip1in \eightpoint ##1}}\null}}}
\gdef\mrg ##1{##1\ymrg{##1}}
\gdef\pmrg##1{##1\xmrg{##1}}}
\def\rf[#1;#2]{\ifmargnotes\mrg{[#1;#2]}\else[{\bf #1}; {#2}]\fi}
\def\rflem[#1;#2]{\ifmargnotes[#1;#2]\ymrg{#1:
{\tt\ignorespaces#2}}\else[{\bf #1};#2]\fi}
\def\({{\rm(}}
\def\){{\rm)}}
\def\[{{\rm[}}
\def\]{{\rm]}}
\def\quote`#1'{{\rm``}#1{\rm''}}
\newdimen\bht\bht0pt\newdimen\bwd\bwd0pt\newdimen\oht\oht0pt\newdimen\owd\owd0pt
\def\boxit#1{\setbox105\hbox{#1}\bht=\ht105\bwd=\wd105%
\advance\bwd by 4pt\advance\bht by 2.5pt%
\hbox to \bwd{\vtop {%
\hrule width \bwd height 0.1pt depth0pt \vss\hbox to \bwd{%
\vrule height \bht width 0.1pt depth4pt \hss\box105\hss
\vrule height \bht width 0.1pt depth4pt} \vss\hrule width \bwd height
0.1pt depth0pt}}}
\newdimen\tskp
\newdimen\ptskp
\newdimen\trnglsk
\newdimen\smspc
\newdimen\lgspc
\newdimen\thk
\newdimen\hlf
\newdimen\hfln
\def\tablerule{\noalign{\hrule}}
\def\heavyrule{\noalign{\hrule height \thk }}
\def\spc#1{\omit&height#1 \double  \cr}
\def\lin#1{&&#1&\cr}
\newdimen\hlflinskip\hlflinskip=0pt
\def\hlflinskip{6.2truept}
\def\cl{\spc{\lgspc}}
\def\up{\heavyrule\cl\lin}
\def\dn{\cl\heavyrule}
\def\rone{\cl\tablerule\cl\lin}
\def\hone{\cl\heavyrule\cl\lin}
\newdimen\dispht\dispht=1truein
\newdimen\dispwd\dispwd=4.45truein
\newdimen\dispskip\dispskip=\baselineskip
\def\tdig#1{\ifnum#1<10 0\fi\number#1}
\def\draft{\ifdraft\else
\mmm=\time\hhh=0
\loop\advance\hhh by 1 \advance\mmm by -60\ifnum\mmm>59\repeat
\advance\mmm by 60 \advance \hhh by -1

\headline=
  {\ifodd\pageno{\eightpoint\jobname---DRAFT---\today}\hfil{\tenrm\folio}
\else{\tenrm\folio}\hfil{\eightpoint\today---DRAFT---\jobname}\fi}
\fi\drafttrue}
\def\citek[#1]{\def\temp{#1}\def\sub{{\bf III$_{K2}$}}
          \ifx\temp\empty[\sub]\else[\sub, #1]\fi}
\def\citeg[#1]{\def\temp{#1}\def\sub{{\bf III$_7$}}
          \ifx\temp\empty[\sub]\else[\sub, #1]\fi}
\def\citei[#1]{\def\temp{#1}\def\sub{{\bf III$_8$}}
          \ifx\temp\empty[\sub]\else[\sub, #1]\fi}
\def\citen[#1]{\def\temp{#1}\def\sub{{\bf III$_9$}}
          \ifx\temp\empty[\sub]\else[\sub, #1]\fi}
\def\citex[#1]{\def\temp{#1}\def\sub{{\bf III$_{10}$}}
          \ifx\temp\empty[\sub]\else[\sub, #1]\fi}
\def\citexi[#1]{\def\temp{#1}\def\sub{{\bf III$_{11}$}}
          \ifx\temp\empty[\sub]\else[\sub, #1]\fi}
\def\citexii[#1]{\def\temp{#1}\def\sub{{\bf III$_{12}$}}
          \ifx\temp\empty[\sub]\else[\sub, #1]\fi}
\def\Q{{\bf Q}}
\newtheorem{theorem}{Theorem}[section]
\newtheorem{lemma}[theorem]{Lemma}
\newtheorem{corollary}[theorem]{Corollary}
\theoremstyle{definition}
\theoremstyle{remark}
\newcounter{overflow}\setcounter{overflow}{0}
\newenvironment{myeqn}[0]
{\ifnum\value{equation}=26
\setcounter{overflow}{1}\setcounter{equation}{0}\fi
\ifnum\value{overflow}=1
\fi
\begin{equation}}
{\end{equation}\ignorespacesafterend}
\def\cal{\mathcal}
\def\roster{\begin{enumerate}[\ \ \ \ \rm(a)]}
 \def\endroster{\end{enumerate}}
\def\subroster{\begin{enumerate}[\rm(1)]}
\def\subsubroster{\begin{enumerate}[\greek(1)]}
\def\disproster\label#1{\begin{disptxt}{#1}\begin{enumerate}[\rm(1)]}
\def\dispsubroster{\begin{enumerate}[\rm(a)]}
\def\dispsubsubroster{\begin{enumerate}[\romnum(1)]}
\def\enddispsubsubroster{\end{enumerate}}
\def\enddispsubroster{\end{enumerate}}
\def\enddisproster{\end{enumerate}\end{disptxt}}
\def\disptext\label#1{\begin{disptxt}{#1}}
\def\enddisptext{\end{disptxt}}
\newdimen\boxwd\boxwd0pt\newdimen\boxht\boxht0pt\newdimen\boxdp\boxdp0pt
\def\boxit#1{\setbox3\hbox{#1}\boxwd=\wd3\advance\boxwd by 7pt%
\leavevmode\hbox{%
  \vrule width .5pt\vbox{\hsize=\boxwd%
  \hrule height .5pt\vskip5pt\hbox{%
  \hskip4pt{#1}\hskip5pt}%
  \vskip4pt\hrule height .5pt}%
  \vrule width .5pt}}
\newenvironment{exnote}[0]{\noindent{\sl Existence note:}\ }{}
\def\Aut{{\mathop{\rm Aut}}}
\def\aut{{\mathop{\rm Aut}}}
\def\Out{{\mathop{\rm Out}}}
\def\Inn{{\mathop{\rm Inn}}}
\def\Syl{{\mathop{\rm Syl}}}
\def\grp#1[#2]{\penalty1000000\global\advance\gpno by 1%
\bigskip\noindent\kern-.2in{\hbox{\boxit{\hbox{
          ${\bf G\cong#1}$}\quad\cite{#2}}}}\penalty10000}
\long\def\finaldisplay#1{\vskip6pt\hbox to \hsize{\hfil#1\hfil\hskip-\hsize\rightline{\qed}}}
\font\bbf=cmbx17
\font\smc=cmcsc10
\font\sm=cmr8
\font\smtt=cmtt8
\begin{document}

\centerline{{\bbf Automorphism groups of sporadic groups}}

\bigskip

\centerline{{\smc Richard Lyons}}
\smallskip
\centerline{{\sm Department of Mathematics, Rutgers University}}

\vskip-1pt
\centerline{{\sm 110 Frelinghuysen Road}}
\vskip-1pt

\centerline{{\sm Piscataway, NJ, 08854-8019 U.S.A.}}
\smallskip
\centerline{{{ \smtt lyons{}{\char64}{}math.rutgers.edu}} }

\medskip
\centerline{{\smc \today}}

\bigskip
\font\sm=cmr8

{\sm Among the simplest invariants of the sporadic finite
  simple groups are their outer automorphism groups. For 12
  of the 26 possible isomorphism types of a sporadic simple
  group G, the outer automorphism group Out(G) has order 2,
  and in the remaining 14 cases, Out(G) is trivial.
  Historically the suspicion of the existence of a sporadic
  group was followed in fairly short order by the
  calculation of a good upper bound on the size of its outer
  automorphism group. In a few cases establishing the
  existence of certain outer automorphisms, like the
  existence of the groups themselves, presented difficulties
  overcome only with the use of machine computation. In any
  case the calculations of the upper bounds, though
  typically straightforward, can be difficult to track down
  in the literature -- perhaps impossible in some
  cases. This note, which contains nothing new, is only
  intended to bring together these calculations. The
  proximate cause of writing them down was a question from
  Bob Oliver about the automorphism groups of some of these
  groups, how they were -- or might be -- calculated, and
  specifically whether the Sylow 2-subgroups of a sporadic
  simple group are self-centralizing in the automorphism
  group of the simple group. The answer is that they are.  }

\section{Introduction}

As each of the $21$ twentieth-century sporadic finite simple
groups $G$ came into view, the early properties that were
worked out included an upper bound on $|\Aut(G)|$, which
eventually was proved to be sharp. By comparison with some
other invariants -- for example, complete local structure,
maximal subgroups, and Schur multiplier -- the structure of
$\Aut(G)$ is relatively easy to settle, except for some
nagging cases where one proves that $|\Out(G)|\le 2$ but
equality is a computational challenge. We gather
calculations here for the record.  In particular they give
an affirmative answer to a question of Bob Oliver about
sporadic $G$'s: for $T\in\Syl_2(G)$, is $Z(T)$ a Sylow
subgroup of $C_{\aut(G)}(T)$?  \medskip

All groups $G$, $X$, etc., are to be assumed to be finite. 
We use the following notation: 

\def\tm{\smallskip\noindent\hangindent.4in}

\tm$G$ is a finite simple group, identified with its
  group $\Inn(G)$ of inner automorphisms. 

\tm{$A=\Aut(G)$ and $\wt A=A/G=\Out(G)$}

\tm{$\wt C(X)=C_A(X)G/G$ and $\wt N(X)=N_A(X)G/G$, for any $X\subseteq
G$} 

\tm{$T\in\Syl_2(G)$, $Z=Z(T)$, and $C=C_G(Z(T))$}

\tm{$M$ is a (large) proper subgroup of $G$ such that $T_M:=T\cap
M\in\Syl_2(M)$. A good choice of $M$ reveals much about
$\Aut(G)$.}

\tm{$F=F^*(M)$}

\tm{$\Aut_A(B)$ is the
  image of the natural mapping $N_A(B)\to\Aut(B)$ defined by
  $g\mapsto\,$conjugation by $g$; thus $\Aut_A(B)\cong
  N_A(B)/C_A(B)$}

\tm{$m_p(X)$ is the $p$-rank of the finite group $X$.}

\begin{theorem} Let $G$ be one of the sporadic groups listed in
Table $1$. Then $\wt A=\Out(G)$ is of order $1$ or $2$, as
indicated there. $G$ possesses a subgroup $M$ of the given type. If
$\wt A\ne \wt 1$, then the next column identifies $\wt A$ and $\wt 1$
in terms of $M$, and the final column gives a $2$-subgroup $S$ of $G$
such that $\wt C(S)=1$.
\end{theorem}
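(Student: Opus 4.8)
\medskip
\noindent\textit{Proof proposal.} Since the statement is a finite list of assertions, one per group in Table~1, the plan is to treat each $G$ separately along the following common lines; the existence of the subgroup $M$ itself is part of the known subgroup structure of $G$.

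\textbf{Reduction to the action on $M$.} As $G=\Inn(G)\trianglelefteq A$ and $T\in\Syl_2(G)$, a Frattini argument gives $A=G\,N_A(T)$, so $\wt A\cong N_A(T)/N_G(T)$; i.e.\ $\wt A=\wt N(T)$. In each case $M$ is chosen so that either $T\le M$ and $M=N_G(U)$ for a characteristic subgroup $U$ of $T$ (whence $N_A(T)\le N_A(U)\le N_A(M)$ automatically), or $M$ is, up to $G$-conjugacy, the unique subgroup of its isomorphism type (equivalently, of its type as a $2$-local), so that an element of $N_A(T)$ moves $M$ to a conjugate and can be corrected by an element of $N_G(T)$ to normalize $M$. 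Either way one gets $\wt A=\wt N(M)$ and a restriction homomorphism $N_A(M)\to\Aut(M)$.

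\textbf{Bounding and identifying $\wt A$.} If $\alpha\in N_A(M)$ induces an inner automorphism of $M$, correct it by that (inner, in $G$) automorphism so that $\alpha$ fixes $M$ pointwise; using that $F=F^*(M)$ is a $p$-group or a central product of quasisimple groups and is self-centralizing, so $C_G(M)=Z(M)$, together with $G=\langle M^G\rangle$, one shows such an $\alpha$ is inner. Hence $\wt A$ embeds into $\Out(M)$, in fact into the subgroup of classes compatible with the $G$-fusion of $M$ and with the way $M$ sits in $G$. Reading $\Aut(M)$ off from the known structure of $M$ (extraspecial-by-classical and similar shapes in the ``sporadic-type'' cases; maximal parabolics or involution centralizers with classical automorphism groups elsewhere) and discarding the automorphisms that fail these compatibility checks leaves a group of order at most $2$ -- the $\wt A$ of the third column, whose nontrivial coset is there described in terms of $M$ (e.g.\ $\wt N(F)$ modulo $\wt C(F)$, or a diagonal or graph automorphism of a composition factor of $M$). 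For the twelve groups with $\wt A\ne\wt 1$ one must also exhibit an automorphism outside $\Inn(G)$: in most cases $G.2$ is available from the original construction or the \textsc{Atlas} and one checks it acts on $M$ as identified, while for the more awkward cases (such as $M_{12}$, $M_{22}$, $J_2$, $J_3$, $McL$, $He$) the existence rests on computations -- some originally machine computations -- that we will cite rather than redo.

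\textbf{The subgroup $S$ and Oliver's question.} For the $2$-subgroup $S\le T$ named in the last column, I would show $\wt C(S)=1$, i.e.\ $C_A(S)\le G$: an automorphism centralizing $S$ fixes enough of the local structure at $S$ (one chooses $S$ self-centralizing, or with $C_G(S)$ controlling the relevant fusion) to be forced to be inner, which one checks by tracking the images of a small generating set of $G$. Since $S\le T$ this gives $C_A(T)\le C_A(S)\le G$, so $C_A(T)=C_G(T)=Z(T)$ (the last equality also being verified for each $G$), whence $Z(T)$ is all of -- hence a fortiori a Sylow $2$-subgroup of -- $C_{\aut(G)}(T)$, the affirmative answer sought. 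The main obstacle is carrying out the middle step honestly for the large groups, where $M$ and $\Aut(M)$ are themselves complicated and the fusion bookkeeping is heavy; a secondary point of care is the reduction to $N_A(M)$ in the cases where $M$ is pinned down only up to $G$-conjugacy by its type.
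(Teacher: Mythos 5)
Your proposal is a strategy outline rather than a proof: every substantive step (``reading $\Aut(M)$ off,'' ``discarding the automorphisms that fail these compatibility checks,'' ``tracking the images of a small generating set'') is deferred, and for this theorem the content \emph{is} the twenty-six case-by-case verifications. Beyond that, two of your generic reductions are actually wrong for groups in the table. First, the claim that ``either way one gets $\wt A=\wt N(M)$'' fails for $G=M_{12}$: there $M\cong M_{11}$ falls into two $G$-classes which the outer automorphism interchanges, so $\wt N(M)=\wt 1$ while $\wt A\cong Z_2$; the table's entry $\wt A/\wt N(M)$ records exactly this, and the correct argument is to bound the \emph{number} of classes of $M_{11}$-subgroups (via fusion of $E_9$'s in a Sylow $3$-subgroup of type $3^{1+2}$), not to force $\wt A$ into $N_A(M)$. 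Second, your route to $\wt C(M)=1$ --- correct $\alpha$ to centralize $M$, then use $C_G(M)=Z(M)$ and $G=\langle M^G\rangle$ --- does not work: an automorphism centralizing $M$ has no a priori controlled action on the other conjugates of $M$, so $G=\langle M^G\rangle$ yields nothing; and indeed for $G=Fi_{24}'$ with $M\cong Fi_{23}$ the nontrivial outer automorphism \emph{does} centralize $M$ (the table records $\wt A=\wt C(M)$ there), so the conclusion you are trying to draw is false in general.

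What is actually needed, and what the paper supplies, is a small toolkit plus group-specific work: a lemma forcing $C_{\Aut(X)}(P)=\Aut_{Z(P)}(X)$ when $F^*(X)=O_p(X)$ and a relevant $H^1$ vanishes; a lemma showing $C_{\Aut(G)}(G_\alpha)=1$ when $G_\alpha$ acts primitively on a distinguished suborbit; and, for $\wt C(M)=1$ in the harder cases, an argument that $H=C_G(\alpha)$ contains enough local subgroups ($N_G(R)$ for various $R$ of odd prime order, centralizers of several involution classes) that $|G:H|$ is squeezed by divisibility and congruence conditions to equal $1$, with Bender--Suzuki ruling out a strongly embedded $H$. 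Establishing $\wt A=\wt N(M)$ also requires case-specific permutation arguments (e.g.\ that $M_{22}$ has no transitive action of degree $56$, for $HS$; the descending-induction orbit count for $M_{24}$; Tits's lemma for $Suz$), none of which follows from ``$M$ is unique of its type.'' As written, your proposal would not survive contact with even the first few rows of the table.
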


\begin{figure}
\centerline{{\bf TABLE 1}}
\def\lgspc{2.7pt}
\medskip
\def\double{&&&&&&&&&&&&}\def\jus{}
\def\thn{1pt}\def\thk{1.5pt}
\centerline{\vbox{\offinterlineskip\halign{%
\strut#&\tabskip0pt\vrule #width\thk\tabskip.35em&
\hss#\hss&\vrule #width\thn&
\hss#\hss&\vrule #width\thn&\hss#\hss&\vrule #width\thn&
\hss#\hss&\vrule #width\thn&\hss#\hss&\vrule #width\thn&
\hss#\hss&\vrule #width\thk\tabskip0pt\cr
\up{Pg.&&$G$&&$\Out(G)$&&$M$&&$\wt A/\wt 1$&&$2$-groups}
\hone{6&&$M_{11}$&&$1$&&$M_9\cong F_{9.8}$&&&&}
\rone{6&&$M_{12}$&&$Z_2$&&$M_{11}$&&$\wt A/\wt N(M)$&&$\wt C(T)=1$}
\rone{7&&$M_{22}$&&$Z_2$&&$M_{21}\cong L_3(4)$&&$\wt N(M)/\wt C(M)$&&$\wt
C(T_M)=1$}
\rone{8&&$M_{23}$&&$1$&&$M_{22}$&&&&}
\rone{8&&$M_{24}$&&$1$&&$M_{23}$&&&&}
\rone{10&&$J_1$&&$1$&&$N_G(T)$&&&&}
\rone{10&&$J_2=HJ$&&$Z_2$&&$C=2^{1+4}_-A_5$&&$\wt N(M)/\wt C(M)$&&$\wt C(O_2(C))=1$}
\rone{10&&$J_3$&&$Z_2$&&$C=2^{1+4}_-A_5$&&$\wt N(M)/\wt C(M)$&&$\wt C(O_2(C))=1$}
\rone{11&&$J_4$&&$1$&&$[C,C]=2^{1+12}_+3M_{22}$&&&&}
\rone{11&&$Co_1$&&$1$&&&&&&}
\rone{12&&$Co_2$&&$1$&&$C=2^{1+8}_+Sp_6(2)$&&&&}
\rone{12&&$Co_3$&&$1$&&$C=2Sp_6(2)$&&&&}
\rone{9&&$HS$&&$Z_2$&&$M_{22}$&&$\wt N(M)/\wt C(M)$&&$\wt C(T)=1$}
\rone{13&&$Mc$&&$Z_2$&&$C=2A_8$&&$\wt N(M)/\wt C(M)$&&$\wt C(T)=1$}
\rone{14&&$Suz$&&$Z_2$&&$G_2(4)$&&$\wt N(M)/\wt C(M)$&&$\wt C(T_M)=1$}
\rone{15&&$He=H\!H\!M$&&$Z_2$&&$E_{5^2}$&&$\wt N(M)/\wt C(M)$&&$\wt C(T)=1$}
\rone{15&&$Ly$&&$1$&&$3Mc2$&&&&}
\rone{16&&$Ru$&&$1$&&${^2}F_4(2)$&&&&}
\rone{13&&$O'N$&&$Z_2$&&$C=4L_3(4)2$&&$\wt N(M)/\wt C(M)$&&$\wt C(T)=1$}
\rone{17&&$Fi_{22}$&&$Z_2$&&$Z_2\times U_6(2)$&&$\wt N(M)/\wt C(M)$&&$\wt
C(T)=1$}
\rone{17&&$Fi_{23}$&&$1$&&$2Fi_{22}$&&&&}
\rone{18&&$Fi_{24}'$&&$Z_2$&&$Fi_{23}$&&$\wt C(M)$&&$\wt C(T)=1$}
\rone{19&&$F_5=HN$&&$Z_2$&&$A_{12}$&&$\wt N(M)/\wt C(M)$&&$\wt C(T)=1$}
\rone{20&&$F_3=Th$&&$1$&&$C=2^{1+8}_+A_9$&&&&}
\rone{21&&$F_2=BM$&&$1$&&$2\,{^2}E_6(2)$&&&&}
\rone{22&&$F_1=M$&&$1$&&$C=2^{1+24}_+Co_1$&&&&}
\dn
}}}
\end{figure}

We shall prove that $\Out(G)$ has order at most what is listed, and
cite the constructions of nontrivial outer automorphisms. 
 Occasionally we get an inductive benefit from our treating all
twenty-six groups together.

We make use of the charts of local structure of the sporadic
groups found in \cite{I:A}, mainly for centralizers of
involutions but occasionally for elements of other small
prime orders. The calculations for that information can be
made without any use of upper bounds on the size of the
automorphism group.

\section{Useful Folklore}

\begin{lemma}
\label{lemmaone}
Let $X$ be a group and set $Q=F^*(X)$.  Then
the following conditions hold:
\begin{enumerate}[{\rm(a)}]
\item $\pi(C_{\aut(X)}(Q))\subseteq \pi(F(X))$.
\item If $Q$ is a $p$-group and $P\in\Syl_p(G)$, then
$C_{\Aut(X)}(P)=\Aut_{Z(P)}(X)$.
\item Suppose that $Q$ is an abelian $p$-group and
$H^1(X/Q,Q)=1$. Then we have $C_{\aut(X)}(Q)=\Aut_Q(X)$.
\item Suppose that $Q$ is an extraspecial $p$-group and
$X/QX^{(\infty)}$ is a $p'$-group. Then
$C_{\aut(X)}(Q/Z(Q))=\Aut_Q(X)$ and $C_{\aut(X)}(Q)=\Aut_{Z(Q)}(X)$.
\item If $Q$ is an abelian or extraspecial $p$-group
and the image of $\Aut_X(Q)$ in $\Out(Q)$ is a self-normalizing
subgroup of $\Out(Q)$, then
$$
\Aut(X)=\Inn(X)\big(C_{\aut(X)}(X/Q)\cap C_{\aut(X)}(Q)\big).
$$
\end{enumerate}
\end{lemma}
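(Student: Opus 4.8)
The plan is to prove each of the five items in turn, using standard facts about generalized Fitting subgroups and cohomology of group extensions.

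For part (a), the key point is that $C_X(Q) \leq Q$ (since $Q = F^*(X)$ is self-centralizing in $X$), and more generally any automorphism centralizing $Q$ is severely constrained. I would argue that if $\alpha \in C_{\aut(X)}(Q)$ has order a prime $r$, then $\langle \alpha \rangle$ acts on $X$ centralizing $Q = F^*(X)$; forming the semidirect product $Y = X\langle\alpha\rangle$, we have $F^*(Y) = F^*(X) = Q$ because components and the Fitting subgroup of $Y$ are not enlarged by an element centralizing $F^*(X)$. Then $\langle\alpha\rangle \leq C_Y(Q) \leq Q$, forcing $r \in \pi(Q) \subseteq \pi(F(X))$ (the last inclusion because any component is perfect, hence contributes no normal $r$-elements of the relevant kind — more precisely $C_{\aut(X)}(Q)$ is nilpotent-ish and meets each component trivially, so its primes lie in $\pi(F(X))$). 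I should be a little careful here to phrase the reduction to prime order and to note that $C_{\aut(X)}(E(X)) \cap C_{\aut(X)}(F(X))$ acts trivially, which is the crux.

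Parts (b), (c), (d) are the substantive extension-theoretic claims. For (c): with $Q$ abelian and $H^1(X/Q, Q) = 1$, any $\alpha \in C_{\aut(X)}(Q)$ induces the identity on $X/Q$ (using part (a)-type reasoning: $C_{\aut(X)}(Q)$ centralizes $F^*(X) = Q$, hence is a $p$-group acting trivially on the $p'$-part, and the map $X/Q \to X/Q$ must be inner-by-... — actually more directly, $\alpha$ gives a $1$-cocycle $X/Q \to Q$ via $x \mapsto x^{-1}\alpha(x) \in Q$, well-defined because $\alpha$ centralizes $Q$; the vanishing of $H^1$ makes this a coboundary, i.e.\ $\alpha$ is conjugation by an element of $Q$), so $C_{\aut(X)}(Q) = \Aut_Q(X)$. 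Part (b) is the special case where $Q$ is a $p$-group and we take the Sylow $p$-subgroup $P$: since $Q = F^*(X)$ is a $p$-group we have $O_{p'}(X) = 1$ and $P \trianglelefteq$-chain forces $Q = O_p(X) \leq P$; an automorphism centralizing $P$ centralizes $Q$, and by (a) it is a $p$-element, and then one identifies $C_{\Aut(X)}(P)$ with $\Aut_{Z(P)}(X)$ by checking that conjugation by $z \in Z(P)$ is the general such automorphism (the containment $\supseteq$ is clear; for $\subseteq$, an automorphism centralizing $P \geq Q$ is realized by an element of $Q \leq P$ by (c)-style reasoning applied after reducing mod $Q$, and centralizing $P$ forces that element into $Z(P)$). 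Part (d) is the extraspecial analogue: $Q/Z(Q)$ is an $\mathbb{F}_p$-vector space, $X/QX^{(\infty)}$ is $p'$, so $H^1(X/Q, Q/Z(Q)) = 1$ by coprimality and one runs the cocycle argument modulo $Z(Q)$; the second equality follows since centralizing $Q$ (not just $Q/Z(Q)$) is a stronger condition cutting things down to $\Aut_{Z(Q)}(X)$, using that $\Aut_Q(X) \cap C_{\aut(X)}(Q)$ corresponds to elements of $Q$ centralizing $Q$, i.e.\ $Z(Q)$.

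Part (e) is where I expect the real work, though it is a clean Frattini-type argument. Set $\bar Q = Q/Z(Q)$ if $Q$ is extraspecial and $\bar Q = Q$ if $Q$ is abelian, so $\bar Q$ is an elementary-abelian-on-top module and $C_{\aut(X)}(\bar Q) = \Aut_Q(X)$ by (c) or (d). The group $\Aut(X)$ acts on $Q$, hence on $\bar Q$, giving a homomorphism $\Aut(X) \to \Aut(\bar Q)$ whose image lies in $N_{\Aut(\bar Q)}(\Aut_X(\bar Q))$; but the hypothesis says the image of $\Aut_X(Q)$ in $\Out(Q)$ — equivalently in $\Aut(\bar Q)$ after quotienting by the inner part, here one must be slightly careful about whether "self-normalizing in $\Out(Q)$" matches "self-normalizing in $\Aut(\bar Q)$", which it does because $\Inn(Q)$ acts trivially on $\bar Q$ in the abelian case and as the translations in the extraspecial case, in either case a normal subgroup that gets absorbed — is self-normalizing, so the image of $\Aut(X)$ in $\Aut(\bar Q)$ equals the image of $\Aut_X(Q) = \Inn(X)$ acting on $\bar Q$. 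A Frattini argument then gives $\Aut(X) = \Inn(X) \cdot \ker(\Aut(X) \to \Aut(\bar Q)) = \Inn(X)\bigl(C_{\aut(X)}(X/Q) \cap C_{\aut(X)}(Q)\bigr)$, once one checks that the kernel of the action on $\bar Q$, intersected appropriately, is exactly $C_{\aut(X)}(X/Q) \cap C_{\aut(X)}(Q)$: an automorphism trivial on $\bar Q$ lies in $\Aut_Q(X)$, so is conjugation by $q \in Q$, which centralizes $X/Q$ automatically and centralizes $Q$ iff $q \in Z(Q)$ — so actually the kernel is $\Aut_{Z(Q)}(X) \leq \Inn(X)$, and one massages the statement into the claimed form by noting $C_{\aut(X)}(X/Q) \cap C_{\aut(X)}(Q) \supseteq \Aut_{Z(Q)}(X)$ and the reverse containment modulo $\Inn(X)$ is what the Frattini argument delivers. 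The main obstacle is getting the bookkeeping between $\Out(Q)$, $\Aut(\bar Q)$, and $\Inn(Q)$ exactly right in the extraspecial case, and making sure the "self-normalizing" hypothesis is used at the correct level.
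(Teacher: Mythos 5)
The most serious gap is in (d). You deduce $H^1(X/Q,Q/Z(Q))=1$ ``by coprimality,'' but the hypothesis is that $X/QX^{(\infty)}$ is a $p'$-group, not that $X/Q$ is. In every application in this paper (e.g.\ $X=2^{1+24}_+Co_1$ with $p=2$), $X$ is perfect, so the hypothesis holds vacuously while $X/Q$ has order divisible by $p$; coprimality gives nothing, and indeed the vanishing of $H^1(Co_1,\Lambda/2\Lambda)$ is a genuine theorem that the paper must prove separately (Corollary \ref{somehones}) precisely because it does \emph{not} follow from (d). The paper's proof of (d) avoids cohomology: working in $H=X\langle\alpha\rangle$, it first adjusts $\alpha$ by an inner automorphism of the extraspecial group $Q$ to get $\beta\in Q\alpha$ centralizing $Q$, sets $Z_1=\langle\beta\rangle Z(Q)=C_H(Q)\trianglelefteq H$, and uses the three subgroups lemma ($[X^{(\infty)},Z_1,X^{(\infty)}]\le[Z(Q),X^{(\infty)}]=1$) to show $X^{(\infty)}$ centralizes $Z_1$; only then does the $p'$-hypothesis enter, forcing the $p$-part of $Z_1$ to be central in $H$. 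Your cocycle argument cannot be repaired without importing exactly the $H^1$-vanishing that the hypothesis of (d) was designed to circumvent.

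Two further defects. In (a), the claim $F^*(Y)=F^*(X)$ for $Y=X\langle\alpha\rangle$ is false: $C_Y(F^*(X))=Z(F(X))\langle\alpha\rangle$ is an abelian normal subgroup of $Y$ not contained in $X$, so $F(Y)$ strictly contains $F(X)$ whenever $\alpha\ne1$; note that if your premise were true, your argument would yield $\alpha\in Q\cap\langle\alpha\rangle=1$, i.e.\ $C_{\aut(X)}(Q)=1$, which is false in general. Also $\pi(Q)\subseteq\pi(F(X))$ fails whenever $E(X)$ involves a prime outside $\pi(F(X))$ (take $X=A_5$). The correct argument is the paper's: $W:=C_Y(F^*(X))$ is abelian and normal in $Y$, so for $r\notin\pi(F(X))$ the subgroup $O_r(W)=\langle\alpha\rangle$ is characteristic in $W$, hence normal in $Y$, hence $[\alpha,X]\le\langle\alpha\rangle\cap X=1$. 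In (b) you cannot simply invoke ``(c)-style reasoning'': the hypothesis $H^1(X/Q,Q)=1$ of (c) is not available there (and $Q$ need not be abelian). What makes the cocycle $x\mapsto[x,\alpha]\in Z(Q)$ a coboundary in (b) is that it vanishes on $P$ and the restriction $H^1(X,Z(Q))\to H^1(P,Z(Q))$ is injective for $P$ a Sylow $p$-subgroup; that step is missing. Your (c) and the Frattini skeleton of (e) do match the paper, but in (e) you should not identify the kernel via (c)/(d), whose hypotheses are not assumed there; instead use directly that $[C_{\aut(X)}(Q),X]\le C_X(Q)\le Q$.
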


\begin{proof} In each case we choose some $\alpha\in \Aut(X)$ and argue
in the semidirect product
$$
H=X\<\alpha>.
$$
In (a), let $\alpha$ be any $p$-element of $C_X(F^*(X))$ for
some prime $p$, and define $W=\<\alpha>Z(F(X))$. Then $W$ is
abelian.  By the $F^*$-Theorem, $W=\<\alpha
C_X(F^*(X))>=C_H(F^*(X))\nor H$. Hence $WF^*(X)=F^*(H)$ and
$W=Z(F^*(H))$. As $p$ does not divide $|F(X)|$,
$\<\alpha>=O_p(W)\nor H$, so $\alpha=1$ as an automorphism
of $X$.

For (b) and (c), set $Q=O_p(X)$ and choose any $\alpha$ such that
$[\alpha, Q]=1$. Then $C_H(Q)=Z(Q)\<\alpha>\nor H$. In particular
$[\alpha,X]\le C_X(Q)=Z(Q)$.  The mapping $f:X\to Z(Q)$ taking
$x\mapsto[x,\alpha]$ is a $1$-cocycle in $Z^1(X,Z(Q))$.  Then $f$ is
cohomologically trivial. This holds by hypothesis in (c), and in (b)
because $f$ vanishes identically on $P$ and the restriction mapping
$H^1(X,Z(Q))\to H^1(P,Z(Q))$ is injective, $P$ being a Sylow
$p$-subgroup of $X$.  Therefore there is $z\in Z(Q)$ such that
$[x,\alpha]=[x,z]$ for all $x\in X$. Then $\alpha$ is conjugation by
$x$. In (b), the hypothesis implies that $x\in Z(P)$. Thus (b) and (c)
hold.

For (d), choose any $\alpha$ such that $[\alpha,Q]\le Z(Q)$ and set $\ov
H=H/Z(Q)$. By (a), $\alpha$ is a $p$-element. Since $Q$ is
extraspecial, $\alpha$ induces an inner automorphism on $Q$, and hence
$C_H(Q)$ contains an element $\beta\in Q\alpha$. We set
$$
Z_1=\<\beta>Z(Q)=\<\beta>C_X(Q)=C_H(Q)\nor H.
$$
We have $[X,Z_1]\le X\cap Z_1=Z(Q)$. Let $Y=X^{(\infty)}$. Then
$[Y,Z_1,Y]\le [Z(Q),Y]=1$ so $[Y,Z_1]=1$. Therefore $C_H(Z_1)$
contains $QY$, so by assumption $H/C_H(Z_1)$ is a $p'$-group. Hence
either $Z_1$ is cyclic or we may write $Z_1=Z(Q)\times\<\gamma>$ with
$\<\gamma>\nor H$, for some $\gamma\in Z(Q)\beta\subseteq Q\alpha$. In
the latter case $\<\gamma>\cap X=1$, so $\gamma\in Z(H)$; in the
former case $H$ centralizes $Z_1/\Phi(Z_1)$ so centralizes $Z_1$. Thus
in any case $\gamma\in Z(H)$, so $\alpha\in QZ(H)$ and the result holds.

Finally in (e), let $A=\Aut(X)$. We have $X/Z(Q)\nor N_A(X)/Z(Q)$, so
the self-normalizing hypothesis forces $N_A(X)=XC_A(Q)$, and
$[C_A(Q),X]\le C_X(Q)\le Q$. The result follows.  
\end{proof}

\begin{corollary}
\label{charptype}
Suppose that $G$ is simple of characteristic $p$-type. 
Let $P\in\Syl_p(G)$. If $\alpha\in C_{\aut(G)}(P)$ and $\alpha$ is a nontrivial
$p'$-element, then $C_G(\alpha)$ is strongly $p$-embedded in
$G$. If $p=2$, then $C_{\aut(G)}(P)\le P$. 
\end{corollary}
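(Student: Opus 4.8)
The plan is to derive the first assertion from Lemma~\ref{lemmaone} and then quote the standard description of strongly $p$-embedded subgroups. Put $M=C_G(\alpha)$. Since $\alpha$ centralizes $P$ we have $P\le M$, so $P\in\Syl_p(M)=\Syl_p(G)$ and $p\mid|M|$; and $M\ne G$ because $\alpha\ne1$. I would show that $N_G(Q)\le M$ for every nonidentity $p$-subgroup $Q$ of $M$; by Sylow's theorem in $M$ it suffices to do this for $Q\le P$, and I would treat in detail the case $Q\trianglelefteq P$, the reduction of the general case to it being routine (one may pass from $N_G(Q)$ to the normalizer of a radical $p$-subgroup, or verify the corresponding $\Gamma_{P,1}$-form of the criterion). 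So let $1\ne Q\trianglelefteq P$ and set $N=N_G(Q)$. Since $\alpha$ centralizes $Q$ it normalizes $N$; since $Q\trianglelefteq P$ we have $P\le N$, so $P\in\Syl_p(N)$; and $F^*(N)=O_p(N)$ since $N$ is $p$-local and $G$ has characteristic $p$-type. Lemma~\ref{lemmaone}(b), applied with $X=N$, gives $C_{\aut(N)}(P)=\Aut_{Z(P)}(N)$, a $p$-group; as $\alpha|_N$ lies there and is a $p'$-element, $\alpha|_N=1$, i.e.\ $N_G(Q)\le M$. Thus $M=C_G(\alpha)$ is strongly $p$-embedded in $G$.

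For $p=2$, let $\alpha\in C_{\aut(G)}(P)$ and write $\alpha=\sigma\tau=\tau\sigma$ with $\sigma$ a $2$-element and $\tau$ a $2'$-element; both are powers of $\alpha$, hence both centralize $P$. If $\tau\ne1$, then by the first assertion $C_G(\tau)$ is strongly $2$-embedded in $G$, so by the known classification of the simple groups possessing a strongly $2$-embedded subgroup, $G\cong\mathrm{PSL}_2(2^n)$, $\mathrm{Sz}(2^n)$ or $\mathrm{PSU}_3(2^n)$; in each of these $C_{\aut(G)}(P)=C_G(P)=Z(P)$, since every automorphism is a product of inner, field and diagonal automorphisms and no nontrivial field or diagonal automorphism centralizes $P$ — so $C_{\aut(G)}(P)$ is a $2$-group, contradicting $\tau\ne1$. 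Hence $\alpha=\sigma$ has $2$-power order. Applying Lemma~\ref{lemmaone}(b) to $N_G(P)$ (where $F^*(N_G(P))=O_2(N_G(P))=P$ by characteristic $2$-type) shows $\alpha$ agrees on $N_G(P)$ with conjugation by some $z\in Z(P)$; replacing $\alpha$ by $\alpha z^{-1}$ we may assume $\alpha$ centralizes $N_G(P)$, and a short argument inside $G\langle\alpha\rangle$ — using that $\alpha$ then becomes a central $2$-element of a Sylow $2$-subgroup whose full normalizer it centralizes — forces $\alpha=1$. So $\alpha=z\in Z(P)\le P$, as claimed.

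I expect the crux to be the bookkeeping in the first assertion rather than the use of Lemma~\ref{lemmaone}, which is automatic once characteristic $p$-type is available: one must pin down exactly which normalizers have to be controlled to conclude strong $p$-embeddedness — the nonidentity normal subgroups of $P$, the radical $p$-subgroups, or all nonidentity $p$-subgroups of $M$ — and cite the right folklore form of that criterion. A secondary obstacle is the last step of the $p=2$ case, where, after the $2'$-part has been removed via the classification, one genuinely needs the local structure of $G$ (or of $G\langle\alpha\rangle$) to eliminate a putative nontrivial $2$-element of $C_{\aut(G)}(P)$ lying outside $P$.
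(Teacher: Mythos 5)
Your strategy for the first assertion is the paper's in outline, but the step you defer as ``routine'' is precisely where the paper does its real work. Establishing $N_G(Q)\le C_G(\alpha)$ only for $1\ne Q\trianglelefteq P$ is not enough: the normal subgroups of a fixed Sylow $p$-subgroup do not form a conjugation family, so there is no direct passage from them to arbitrary nontrivial $Q\le P$, and the $\Gamma_{P,1}$ reformulation by itself does not discharge the obligation since $\Gamma_{P,1}(G)$ is generated by the normalizers of \emph{all} nontrivial subgroups of $P$. The paper instead works with the extremal subgroups $R\le P$ (those with $N_P(R)\in\Syl_p(N_G(R))$), observes that $\alpha$ centralizes $N_P(R)\supseteq O_p(N_G(R))=F^*(N_G(R))$ and hence, being a $p'$-element, centralizes all of $N_G(R)$ by Lemma~\ref{lemmaone}(a), and then invokes Alperin's fusion theorem: $C_G(\alpha)$ contains the normalizers of all extremal subgroups and controls strong $G$-fusion in $P$, and these two facts together give $N_G(R)\le C_G(\alpha)$ for every $1\ne R\le P$. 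Your detailed computation actually applies verbatim to extremal $R$ (nothing in it uses normality in $P$ beyond $N_P(Q)\in\Syl_p(N_G(Q))$), so the repair is to run it for extremal subgroups and then supply the Alperin step explicitly; ``radical $p$-subgroups'' is not the right class to quote here.

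For $p=2$ your handling of the $2'$-part is exactly the paper's (Bender--Suzuki plus the classical fact for Bender groups), but the extra step you sketch for $2$-elements cannot be completed as described. The claim that a $2$-element of $\Aut(G)$ centralizing $N_G(P)$ must be trivial fails for simple groups of characteristic $2$-type: take $G=A_6$, which is of characteristic $2$-type with $N_G(P)=P$ for $P=\langle(1\,2\,3\,4)(5\,6),(1\,3)(5\,6)\rangle\in\Syl_2(A_6)$; the transposition $(5\,6)\in S_6\le\Aut(A_6)$ centralizes $P=N_G(P)$ yet induces an outer automorphism. A Burnside/transfer argument in $G\langle\alpha\rangle$ recovers only the fact that $\alpha$ is not inner, which is no contradiction. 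The paper does not attempt this step: its second sentence is proved only under the standing hypothesis that a nontrivial $2'$-element centralizes $P$ (which by the first assertion and Bender--Suzuki forces $G$ to be a Bender group, where the full statement is classical), and in every application in the paper only the resulting fact that $C_{\Aut(G)}(P)$ contains no nontrivial $2'$-element is used. You should either adopt that reading or restrict the $2$-element claim accordingly.
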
 

\begin{proof} For any $1\ne R\le P$ such $N_P(R)\in\Syl_p(N_G(R))$,
  $[\alpha,R]=1$ and so $\alpha$ normalizes $N_G(R)$, centralizing the
  Sylow $p$-subgroup $N_P(R)$. Hence $[\alpha, N_G(R)]=1$ by Lemma
  \ref{lemmaone}a. This has two consequences: first, normalizers of
  every extremal subgroup of $P$ lie in $C_G(\alpha)$; second, by
  Alperin's Theorem, $C_G(\alpha)$ controls strong $G$-fusion in
  $R$. Together these imply that $C_G(\alpha)$ contains $N_G(R)$ for
  all $1\ne R\le P$, proving the first statement. If $p=2$, then by
  the Bender-Suzuki theorem, $G$ is a Bender group. In this case it is
  well-known that $C_{\aut(G)}(P)\le P$. 
\end{proof}

\begin{lemma}
\label{lemmatwo}
Let $G$ be a simple group and
$T\in\Syl_2(G)$. Suppose that
\begin{enumerate}[\rm(a)]
\item $Z:=Z(T)=\<z>\cong Z_2$.
\item $F^*(C_G(Z))$ is a $2$-group.
\item There is $U\nor T$ such that $U\cong E_{2^2}$ and
$U^\#\subseteq z^G$.
\end{enumerate}
Let $\alpha\in C_A(T)$. Then one of the groups $W=C_G(\alpha)$,
$W=C_G(\alpha z)$ contains $\<C_G(Z),N_G(U)>$ and satisfies the conditions
\begin{enumerate}[\rm(1)]
\item $O_{2'}(W)=Z(W)=1$, and
\item $C_G(Z)<W$. 
\end{enumerate}

Moreover if $C_G(Z)$ is a maximal subgroup of $G$, or if $G$ is
characterized among all finite groups by the conditions $(1)$ and
$(2)$ and the isomorphism type of $C_G(Z)$, then
$C_{\aut(G)}(T)=\Aut_Z(G)\le \Inn(G)$.
\end{lemma}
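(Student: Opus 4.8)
The plan is to analyze an arbitrary $\alpha\in C_A(T)$ by passing to the semidirect product $H=G\langle\alpha\rangle$ and studying how $\alpha$ interacts with the two distinguished subgroups $C_G(Z)$ and $N_G(U)$. First I would observe that since $\alpha$ centralizes $T$, it centralizes $Z=\langle z\rangle$, hence normalizes $C:=C_G(Z)$ and centralizes $T\in\Syl_2(C)$; by hypothesis (b), $F^*(C)=O_2(C)$, so Lemma~\ref{lemmaone}(b) applies to $C$ and gives that $\alpha$ induces on $C$ an inner automorphism by an element of $Z(T)=Z$. Thus, after multiplying $\alpha$ by the inner automorphism of $G$ induced by a suitable element of $Z$ — that is, after possibly replacing $\alpha$ by $\alpha z$, where we conflate $z$ with conjugation by $z$ — we may assume $\alpha$ centralizes $C_G(Z)$ pointwise. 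This is exactly the dichotomy ``$W=C_G(\alpha)$ or $W=C_G(\alpha z)$'' in the statement: set $W=C_G(\beta)$ for whichever of $\beta\in\{\alpha,\alpha z\}$ now centralizes $C_G(Z)$. In particular $C_G(Z)\le W$.

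Next I would show $N_G(U)\le W$. Since $U\nor T$ and $\alpha$ centralizes $T$, $\alpha$ normalizes $N:=N_G(U)$ and centralizes $T\cap N$, which is a Sylow $2$-subgroup of $N$ (as $U\nor T$ forces $T\le N$, so in fact $T\in\Syl_2(N)$). Now $\beta$ centralizes $C_G(Z)=C_G(z)$, and by hypothesis (c) every involution of $U$ is $G$-conjugate to $z$, so $C_G(u)$ is $G$-conjugate to $C_G(z)$ for each $u\in U^\#$; I would use a fusion/transfer argument — conjugating the relevant centralizers back into a Sylow $2$-subgroup and invoking that $\beta$ centralizes $T$ together with Lemma~\ref{lemmaone}(a) applied to each $C_G(u)$ — to conclude $\beta$ centralizes $\langle C_G(u):u\in U^\#\rangle$, and then that $\beta$ centralizes $N_G(U)$ (using that $N_G(U)$ is generated by $C_{N_G(U)}(U)$ together with something that permutes the three involutions of $U$, all of whose centralizers $\beta$ fixes; here $C_G(U)\le C_G(z)=C_G(u)$ for any $u$). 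Hence $\langle C_G(Z), N_G(U)\rangle\le W$, giving conclusion~(2) of the lemma provided this join strictly contains $C_G(Z)$ — which it does, since $N_G(U)\not\le C_G(Z)$ because $N_G(U)$ acts nontrivially on $U$ (by (c), $U$ is not central in $G$, so some element moves $z$-conjugates around), while $C_G(Z)$ fixes $z$ hence fixes $U\cap z^G$ setwise in a way that... I would need to be slightly careful here, but the point is that an element of $N_G(U)$ inducing a nontrivial permutation of $U^\#$ does not centralize $z$.

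For conclusion~(1), that $O_{2'}(W)=Z(W)=1$: since $C_G(Z)\le W$ and $F^*(C_G(Z))=O_2(C_G(Z))$ is self-centralizing in $C_G(Z)$, I would note $Z(W)\le C_W(C_G(Z))\le C_G(F^*(C_G(Z)))\cap C_G(Z)=Z(F^*(C_G(Z)))$, a $2$-group, so $Z(W)$ is a $2$-group centralized by $C_G(Z)$, forcing $Z(W)\le Z$; but $N_G(U)\le W$ moves $z$, so $Z(W)=1$. Similarly $O_{2'}(W)$ is normalized by $C_G(Z)$ and centralizes $z$-conjugates appropriately; using that $C_G(Z)$ has $F^*$ a $2$-group, $[O_{2'}(W),C_G(Z)]\le O_{2'}(W)\cap C_G(Z)\le O_{2'}(C_G(Z))=1$, so again $O_{2'}(W)\le C_G(z)$ is centralized by everything in sight and one pins it down to $1$ using $N_G(U)\le W$. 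Finally, the ``moreover'' clause: if $C_G(Z)$ is maximal in $G$, then $C_G(Z)<W\le G$ forces $W=G$, i.e. $\beta=1$, so $\alpha\in\{1,z\}$ as an automorphism and $C_A(T)=\langle z\rangle=\Aut_Z(G)\le\Inn(G)$; alternatively, if $G$ is characterized by (1), (2) and the isomorphism type of $C_G(Z)$, then $W$ is a group with a subgroup isomorphic to $C_G(Z)$ — indeed $W\ge C_G(Z)=C_W(Z)$ since centralizers only grow — satisfying (1) and (2), hence $W\cong G$; a short order/index comparison ($|W|\le|G|$ while $W$ contains $C_G(Z)$) then gives $W=G$ and the same conclusion.

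The main obstacle I expect is the middle step: showing cleanly that $\beta$ centralizes all of $N_G(U)$, not merely $C_G(Z)$. The delicate point is transporting the information ``$\beta$ centralizes $C_G(z)$'' to ``$\beta$ centralizes $C_G(u)$ for the other two involutions $u\in U^\#$'' — one knows $C_G(u)$ is abstractly $G$-conjugate to $C_G(z)$ by (c), but to deduce $\beta$ acts trivially on it one must locate a suitable Sylow $2$-subgroup inside $C_G(u)$ that $\beta$ centralizes and then invoke Lemma~\ref{lemmaone}(b) (or (a)) for $C_G(u)$; this requires knowing $F^*(C_G(u))$ is a $2$-group, which follows from (b) by conjugacy, and requires a genuine fusion argument rather than a formal one. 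Everything else is either the formal semidirect-product manipulation of Lemma~\ref{lemmaone} or a maximality/characterization endgame.
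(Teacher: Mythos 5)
Your overall architecture matches the paper's --- reduce on $C_G(Z)$ via Lemma \ref{lemmaone}(b), get $N_G(U)$ inside $W$, kill $O_{2'}(W)$ and $Z(W)$ using the transitivity of $N_G(U)$ on $U^\#$, and finish by maximality or the characterization hypothesis --- and your endgame and your $Z(W)=1$ argument are essentially right. But the step you yourself flag as the main obstacle is a genuine gap, and the route you propose for it fails. You want $\beta$ to centralize $C_G(u)$ for each $u\in U^\#$ by applying Lemma \ref{lemmaone} to $C_G(u)$. The problem is that for $u\ne z$, $\beta$ is only known to centralize $C_T(u)=C_T(U)=:T_0$, and $T_0$ is \emph{not} a Sylow $2$-subgroup of $C_G(u)$: since $u\in z^G$ and $z$ is $2$-central, $C_G(u)$ has Sylow $2$-subgroups of order $|T|$, whereas $|T:T_0|=2$ (because $U\not\le Z(T)$). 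So Lemma \ref{lemmaone}(b) cannot be invoked for $X=C_G(u)$. Moreover, even if $\beta$ did centralize every $C_G(u)$, the group $\langle C_G(u):u\in U^\#\rangle$ need not contain $N_G(U)$; what must be put into $W$ is an element permuting $U^\#$ nontrivially, and producing such an element inside a subgroup to which Lemma \ref{lemmaone}(b) \emph{does} apply is exactly the missing content.

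The paper's device is: $U=\Omega_1(Z(T_0))$ is characteristic in $T_0$; for each $u\in U^\#$ one expands $T_0$ to $T_u\in\Syl_2(C_G(u))$, which by (c) is a Sylow $2$-subgroup of $G$, so $\Aut_{T_u}(U)$ contains the involution of $\Aut(U)$ fixing $u$ and hence $\Aut_G(U)=\Aut(U)\cong\Sigma_3$; a Frattini argument then places a $3$-element acting nontrivially on $U$ inside $N_G(T_0)$, which contains $T$ as a Sylow $2$-subgroup and has $F^*$ a $2$-group, so Lemma \ref{lemmaone}(b) applies to $N_G(T_0)$. This is also where the $\alpha$-versus-$\alpha z$ dichotomy genuinely arises: conjugation by $z$ is trivial on $C_G(Z)=C_G(z)$, so your first replacement is vacuous and pins nothing down; the replacement is needed to make $\alpha$ centralize $N_G(T_0)$, where $z$ is not central. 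A smaller error: your inequality $[O_{2'}(W),C_G(Z)]\le O_{2'}(W)\cap C_G(Z)$ is unjustified (the commutator lies in $O_{2'}(W)$ but there is no reason it lies in $C_G(Z)$); the correct argument is coprime action of $U$ on $O_{2'}(W)$ together with transitivity of $N_G(U)\le W$ on $U^\#$, reducing to $C_{O_{2'}(W)}(z)\le O_{2'}(C_G(Z))=1$.
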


\begin{proof} The importance of such groups $U$ was made clear in the
$N$-group paper of Thompson \cite{Thompson:NgpsI}. Set $T_0=C_T(U)$, so that
$|T:T_0|=2$ as $U\not\le Z(T)$. If $m_2(Z(T_0))>2$, then
$C_{Z(T_0)}(T/T_0)$ is noncyclic and lies in $Z$, which is a
contradiction. So $m_2(Z(T_0))=2$ and $U=\Omega_1(Z(T_0))\chr T_0$.
For any $u\in U^\#$, expand $T_0$ to $T_u\in\Syl_2(C_G(u))$. Since
$u\in z^G$, $T_u\in\Syl_2(G)$ and $\Aut_{T_u}(U)$ contains the unique
involution of $\Aut(U)$ fixing $u$. But $u$ was arbitrary, so
$\Aut_G(U)\cong \Aut(U)$. By a Frattini argument, there is a
$3$-element of $N_G(T_U)$ acting nontrivially on $U$. So
$N_G(T_U)\not\le C_G(Z)$.

Since $F^*(C_G(Z))$ is a $2$-group and $Z\le U$,
$F^*(C_G(U))=F^*(C_{C_G(Z)}(U))$ is a $2$-group as well (5.12 of
\cite{I:A}). By Lemma \ref{lemmaone}b applied to both $X=C_G(Z)$ and
$X=N_G(T_U)$, $\alpha$ centralizes $C_G(Z)$ and acts on $N_G(T_0)$ as
conjugation by $1$ or $z$. Replacing $\alpha$ by $\alpha z$ if
necessary, we may assume that $[\alpha, N_G(T_U)]=1$, whence
$$
W:=C_G(\alpha)>C_G(Z).
$$
If $O_{2'}(W)\ne 1$, then $Y:=C_{O_{2'}(W)}(u)\ne 1$ for some $u\in
U^\#$. Conjugating by an element of $N_G(U)$, we get
$C_{O_{2'}(W)}(z)\ne 1$. But $C_{O_{2'}(W)}(z)\le
O_{2'}(C_W(z))=O_{2'}(C_G(Z))=1$, contradiction. Thus
$O_{2'}(W)=1$. Finally if $Z(W)\ne 1$, it follows that $Z(W)\cap T\ne
1$, so $Z(T)\le Z(W)$ and $W\le C_G(Z)$, again a contradiction.
Therefore $Z(W)=1$, completing the proof of $(1)$ and $(2)$. The final
statement is an immediate consequence. 
\end{proof}

\begin{lemma}
\label{lemmathree}
Suppose that $G$ acts on the set
$\Omega$, and let $\alpha\in\Omega$. Suppose that 
\begin{enumerate}[\rm(a)]
\item $G$ acts faithfully and primitively on $\Omega$;
\item $G_\alpha$ acts faithfully and primitively on some
$G_\alpha$-orbit $\Psi$ on $\Omega-\{\alpha\}$;
\item $\Psi$ is the unique $G_\alpha$-orbit of length $|\Psi|$; 
\item One of the following holds:
\begin{enumerate}[\rm(1)]
\item Conditions (b) and (c) hold for all $G_\alpha$-orbits on
$\Omega-\{\alpha\}$; or 
\item For all $\beta\in\Psi$, $C_{\aut(G_\alpha)}(G_{\alpha\beta})=1$; and
\end{enumerate}
\item $G_\alpha$ is not cyclic.
\end{enumerate}
Then $C_{\aut(G)}(G_\alpha)=1$. 
\end{lemma}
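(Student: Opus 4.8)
The plan is to fix $\sigma\in C_{\aut(G)}(G_\alpha)$ --- equivalently, an automorphism of $G$ fixing $H:=G_\alpha$ pointwise --- and to show $\sigma=1$. Since $G$ is transitive on $\Omega$ by (a), identify $\Omega$ with the coset space $G/H$, with $\alpha\leftrightarrow H$ and $G$ acting by left translation. As $\sigma$ fixes $H$ setwise, it induces a permutation $\hat\sigma$ of $\Omega$ by $\hat\sigma(gH)=\sigma(g)H$; one checks directly that $\hat\sigma$ fixes $\alpha$ and that $\hat\sigma\,g\,\hat\sigma^{-1}=\sigma(g)$ in $\mathrm{Sym}(\Omega)$ for every $g\in G$. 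Because $G$ is faithful on $\Omega$, it then suffices to prove $\hat\sigma=1$.

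The first main step is to show $\hat\sigma$ fixes $\Psi$ pointwise. Since $\sigma(g)=g$ for $g\in H$, the permutation $\hat\sigma$ centralizes $H$ in $\mathrm{Sym}(\Omega)$; hence it permutes the $H$-orbits on $\Omega-\{\alpha\}$, carrying the $H$-orbit of a point $x$ onto that of $\hat\sigma(x)$, which has the same $H$-stabilizer and so the same length. By the uniqueness in (c), $\hat\sigma(\Psi)=\Psi$, so $\hat\sigma|_\Psi$ lies in the centralizer in $\mathrm{Sym}(\Psi)$ of the image $K$ of $H$. By (b), $K$ is faithful and primitive; by (e) it is not regular, since a regular primitive group is cyclic of prime order. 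The centralizer in $\mathrm{Sym}(\Psi)$ of a transitive group $K$ is isomorphic to $N_K(K_\beta)/K_\beta$ for $\beta\in\Psi$, and here $K_\beta$ is maximal in $K$ (primitivity), nontrivial (non‑regularity), and not normal in $K$ (faithfulness), so this centralizer is trivial. Thus $\hat\sigma$ fixes $\Psi$ pointwise. Under alternative (d)(1) the same argument applies verbatim to every $H$-orbit on $\Omega-\{\alpha\}$, so $\hat\sigma$ fixes all of $\Omega$ and $\sigma=1$; so from now on assume (d)(2).

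Fix $\beta\in\Psi$. Then $\hat\sigma$ fixes $\beta$, hence normalizes $G_\beta$, inducing $\tau\in\Aut(G_\beta)$ with $\tau(g)=\sigma(g)$ for $g\in G_\beta$; in particular $\tau$ centralizes $G_{\alpha\beta}=H\cap G_\beta$. By transitivity of $G$ on $\Omega$, hypotheses (a)--(e) hold with $\beta$ in place of $\alpha$, where the distinguished orbit $\Psi_\beta$ may be taken to be the image of $\Psi$ under a group element sending $\alpha$ to $\beta$, so $|\Psi_\beta|=|\Psi|$. I would then show $\alpha\in\Psi_\beta$: for the orbital $\mathcal{O}=G\cdot(\alpha,\beta)$ on $\Omega\times\Omega$, the set $\{y:(\alpha,y)\in\mathcal{O}\}$ is exactly the $H$-orbit $\Psi$, while $\{x:(x,\beta)\in\mathcal{O}\}$ is the $G_\beta$-orbit of $\alpha$; counting $|\mathcal{O}|$ by rows and by columns shows that the latter set also has size $|\Psi|$, and since by the $\beta$-version of (c) there is a unique $G_\beta$-orbit of length $|\Psi_\beta|=|\Psi|$, the $G_\beta$-orbit of $\alpha$ is $\Psi_\beta$. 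Now the $\beta$-version of (d)(2), applied to the point $\alpha\in\Psi_\beta$, gives $C_{\aut(G_\beta)}(G_{\beta\alpha})=1$, and since $\tau$ centralizes $G_{\beta\alpha}$ we get $\tau=1$; hence $\sigma$ fixes $G_\beta$ pointwise as well. Finally $H$ is maximal in $G$ by (a), and $G_\beta\not\le H$ (else $|G_\beta|=|H|$ forces $G_\beta=H$, so $H$ fixes $\beta$ and $\Psi=\{\beta\}$, contradicting the faithful action of the non‑cyclic group $H$ on $\Psi$), so $G=\langle H,G_\beta\rangle$; therefore $\sigma$ fixes $G$ pointwise, i.e.\ $\sigma=1$.

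The coset-space bookkeeping and the classical formula for the centralizer of a transitive group are routine. The step I expect to need the most care is the passage from the base point $\alpha$ to a point $\beta\in\Psi$ in case (d)(2): hypothesis (d)(2) only controls automorphisms of $G_\alpha$ that centralize stabilizers of points of $\Psi$, so before it can be used one must recognize $\alpha$ itself as a point of the distinguished $G_\beta$-orbit $\Psi_\beta$, which is precisely what the orbital count (out‑degree equals in‑degree) supplies.
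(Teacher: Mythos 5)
Your proof is correct and follows essentially the same route as the paper's: show the induced permutation centralizing $G_\alpha$ fixes $\Psi$ pointwise (using (c) and the triviality of the centralizer in $\mathrm{Sym}(\Psi)$ of a faithful, primitive, non-regular group), then either repeat over all suborbits in case (d1) or invoke (d2) at a point $\beta\in\Psi$ and conclude via $\langle G_\alpha,G_\beta\rangle=G$. Your explicit paired-suborbit count, identifying the $G_\beta$-orbit of $\alpha$ with $\Psi_\beta$ so that (d2) legitimately transports to $\aut(G_\beta)$, fills in a step the paper's proof leaves implicit.
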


\begin{proof} Let $a\in C_{\aut(G)}(G_\alpha)$, and set $H=G\<a>$, the
semidirect product. Then $H$ acts on $\Omega$ with
$H_\alpha=G_\alpha\times\<a>$. By (c), $a$ stabilizes $\Psi$. 
Suppose that $a\downarrow_\Psi\ne1$. By (b), $G_\alpha$ is faithful and
primitive on $\Psi$. Since $[a,G_\alpha]=1$, it follows that $\<a>$ is
of prime order and transitive on $\Psi$. But then $G_\alpha$ embeds in
$C_{\Sigma_\Psi}(a\downarrow_\Psi)=\<a\downarrow_\Psi>$, which
contradicts (e). Therefore $a\downarrow_\Psi=1_\Psi$. If (d1) holds,
then the same argument shows that $a$ fixes $\Omega$ pointwise, whence
$a=1$. On the other hand if (d2) holds, then for any $\beta\in
\Psi$,  $a$ fixes $\beta$ so $a$ acts on $G_\beta$, and (d2) gives
$[a,G_\beta]=1$. In this case $C_G(a)$ contains $\<G_\alpha, G_\beta>$,
which equals $G$ by (a), and again $a=1$.   
\end{proof}

\begin{lemma}[cf. \cite{AlperinGorenstein:VanishingCoho}]
\label{lemmafour}
Suppose that $V$ is an $\F_pG$-module. Let $x\in G$ be a $p'$-element
such that $C_V(x)=0$. Put a graph structure on $x^G$ by joining $x$
and $y$ if and only if $\langle x\rangle$ and $\langle y \rangle$
normalize each other. Let $C$ be the connected component of $x$. If
$G$ normalizes $C$, or if $G=\langle C_G(y)\,|\,y\in C\rangle$, or if $x\in
O_{p'}(G)$, then $H^1(G,V)$ is trivial.
\end{lemma}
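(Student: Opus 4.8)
The plan is to reduce to a cohomological vanishing statement by a standard argument: $H^1(G,V)$ classifies complements to $V$ in the semidirect product $H=V\rtimes G$ up to $V$-conjugacy, so it suffices to show any two complements are $V$-conjugate, or equivalently that a given complement $G_0$ and its conjugate $G_0^v$ ($v\in V$) coincide after adjusting $v$. Equivalently, I would work with a cocycle $f\in Z^1(G,V)$ and aim to show it is a coboundary. The key local input is the hypothesis $C_V(x)=0$ for the $p'$-element $x$: since $x$ is a $p'$-element acting fixed-point-freely on $V$, the map $v\mapsto [v,x]$ is bijective on $V$, so by the usual argument (Maschke / the fact that $H^1(\langle x\rangle,V)=0$ when $C_V(x)=0$), the restriction of $f$ to $\langle x\rangle$ is a coboundary; after subtracting a coboundary we may assume $f|_{\langle x\rangle}=0$, i.e. the chosen complement contains $x$.

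Next I would propagate this along the graph. If $x$ and $y$ are joined, so $\langle x\rangle$ and $\langle y\rangle$ normalize each other, then $D=\langle x,y\rangle$ is a $p'$-group (generated by two $p'$-elements each normalizing the other's cyclic group, so $\langle x\rangle\langle y\rangle$ is a $p'$-subgroup, actually a group), and $C_V(y)=0$ as well since $y$ is conjugate to $x$; hence $H^1(D,V)=0$ (the group $D$ has order prime to $p$). Because both $x$ and $y$ have fixed-point-free action, one checks the complement containing $x$ must, after no further adjustment, also contain a conjugate of $y$ — more carefully, $f|_D$ is a coboundary $dv$ with $[v,x]=f(x)=0$, and $C_V(x)=0$ forces $v=0$, so $f(y)=0$ too. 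Thus the set $\{\,y\in x^G : f(y)=0\,\}$ is closed under the graph adjacency, hence contains the whole connected component $C$ of $x$. So $f$ vanishes on every element of $C$.

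Finally I would invoke whichever of the three alternative hypotheses is in force. If $x\in O_{p'}(G)$, then $C=x^G$ lies in $O_{p'}(G)$ and $\langle C\rangle\nor G$ is a normal $p'$-subgroup with $C_V(\langle C\rangle)=0$ (it contains $x$), so $H^1(G,V)$ injects into $H^1(G,V)^{?}$... more precisely one uses the inflation–restriction sequence for $N=\langle C\rangle\nor G$: $H^1(G/N,V^N)\to H^1(G,V)\to H^1(N,V)$, and $V^N\subseteq C_V(x)=0$ kills the left term while $H^1(N,V)=0$ since $N$ is a $p'$-group, giving $H^1(G,V)=0$. If instead $G$ normalizes $C$, then $N=\langle C\rangle$ is again normal and the same inflation–restriction argument applies. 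If $G=\langle C_G(y) : y\in C\rangle$, then since $f$ vanishes on each $y\in C$ and $f$ restricted to $C_G(y)$ is a cocycle valued in $V$, one shows $f$ vanishes on $C_G(y)$: indeed for $g\in C_G(y)$, the cocycle identity on $gy=yg$ gives $f(g)+g\cdot f(y)=f(y)+y\cdot f(g)$, so $(y-1)f(g)=0$, i.e. $f(g)\in C_V(y)=0$; hence $f$ vanishes on a generating set and so is identically $0$.

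The main obstacle is the middle step — showing the vanishing set of $f$ is closed under the graph edges. The delicate point is that "$\langle x\rangle$ and $\langle y\rangle$ normalize each other" must be used to guarantee $\langle x,y\rangle$ is a $p'$-group (so that its first cohomology with coefficients in $V$ vanishes), and then the fixed-point-freeness of $x$ must be leveraged to pin down the coboundary uniquely; one has to be slightly careful that $C_V(y)=0$ holds, which follows only because $y$ is $G$-conjugate to $x$. Once connectivity propagates the vanishing across $C$, the three endgame cases are routine inflation–restriction or generation arguments.
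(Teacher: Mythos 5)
Your proof is correct in its essentials and takes the same route as the paper's, just written in the language of cocycles rather than of the codimension-one extension $V\le W$ with $G$ trivial on $W/V$: your condition ``$f(y)=0$'' is the paper's condition ``$C_W(y)=C_W(x)$'', and your propagation step along edges is the paper's observation that $z$ normalizes, hence equals, the line $C_W(y)$. Two slips in your endgame deserve repair, though neither is fatal. First, in the case $x\in O_{p'}(G)$ you assert $C=x^G$; the component need not exhaust the conjugacy class, but this case is immediate anyway from inflation--restriction applied to $N=O_{p'}(G)$ itself (or to $\langle x^G\rangle\le O_{p'}(G)$), with no graph needed. Second, in the case where $G$ normalizes $C$ you say the same inflation--restriction argument applies with $N=\langle C\rangle$; but that argument used $H^1(N,V)=0$ on the grounds that $N$ is a $p'$-group, and a subgroup generated by conjugates of a $p'$-element need not be a $p'$-group, so this step fails as written. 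What is true, and suffices, is that the restriction of your particular cocycle to $\langle C\rangle$ is already the zero cocycle (the vanishing set of a cocycle is a subgroup), or more directly: for any $g\in G$ one has $x^g\in C$, so $f(x)=f(x^g)=0$, and the cocycle identity then gives $(x-1)f(g)=0$, whence $f(g)\in C_V(x)=0$. This is exactly the computation you already carried out in the third case for $g\in C_G(y)$, applied now to arbitrary $g$.
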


\begin{proof}
  It suffices to show that if $W$ is an $\F_pG$-module containing $V$
  with $\dim W=\dim V+1$ and $G$ centralizing $W/V$, then $W=V\oplus
  C_W(G)$. Since $x$ is a $p'$-element and $C_V(x)=0$. we have
  $W=V\oplus W_y$ for each $y\in C$, where we set $W_y=C_W(y)$. Then
  whenever $y,z\in C$ are connected, $z$ normalizes $W_y$ and so
  $W_y=W_z$. Hence $W_y=W_x$ for all $y\in C$. Hence $W_x$ is 
  normalized and then centralized by $N_G(C)$, as well as by $\langle
  N_G(\langle y)\rangle \,|\,y\in C\rangle$. Obviously $C\subseteq
  N_G(C)$ so the result follows. 
\end{proof}

\begin{corollary}
\label{somehones}
  Let $G=F_1$ or $Co_2$. Let $z$ be a $2$-central
  involution of $G$, $C=C_G(z)$, and $Q=O_2(C)$. Then 
  $H^1(C/Q,Q/\<z>)$ is trivial. Consequently, with Lemma
  $\ref{lemmaone}b$, $C_{\aut(C)}(T)=1$, for $T\in\Syl_2(G)$. 
\end{corollary}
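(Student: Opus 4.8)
\emph{Set-up.} From the involution-centralizer data in \cite{I:A} one has, in each of the two cases, $Z(Q)=\langle z\rangle$, $F^*(C)=O_2(C)=Q$ (because $C/Q$ is simple), and $C$ perfect (because $C/Q$ is perfect and $L:=C/Q$ acts irreducibly and nontrivially on $V:=Q/\langle z\rangle$, so $[Q,C]=Q$). Concretely: for $G=F_1$, $Q\cong 2^{1+24}_+$, $L\cong Co_1$, and $V$ is the $24$-dimensional $\F_2Co_1$-module afforded by $\Lambda/2\Lambda$, $\Lambda$ the Leech lattice; for $G=Co_2$, $Q\cong 2^{1+8}_+$, $L\cong Sp_6(2)$, and $V$ is the irreducible $8$-dimensional $\F_2Sp_6(2)$ spin module. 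In both cases $V$ is a faithful irreducible $\F_2L$-module, and what must be proved is $H^1(L,V)=0$.

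\emph{The cohomology.} For this I would apply Lemma~\ref{lemmafour} with $p=2$, choosing a $2'$-element $x\in L$ with $C_V(x)=0$ whose connected component $\Gamma$ in the commuting--normalizing graph of that lemma equals the full class $x^L$ (so that $L$ normalizes $\Gamma$; equivalently $L=\langle C_L(y):y\in\Gamma\rangle$, since $L$ is simple). For $L\cong Co_1$ a natural candidate is an element $x$ of order $3$ or $5$ endowing $\Lambda$ with an irreducible module structure over $\bZ[\zeta_3]$, resp.\ $\bZ[\zeta_5]$: then $1-x$ has determinant $3^{12}$, resp.\ $5^6$, on $\Lambda$, which is odd, so $1-x$ is invertible on $V=\Lambda/2\Lambda$ and $C_V(x)=0$. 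For $L\cong Sp_6(2)$ one takes $x$ of order $3$ acting on $V$ without fixed points (equivalently, of Brauer character $-4$ on $V$, so that the restriction of $V$ to $\langle x\rangle$ is a sum of four copies of the natural $2$-dimensional $\F_2\langle x\rangle$-module). Granting that such an $x$ lies in a single connected graph component, Lemma~\ref{lemmafour} yields $H^1(L,V)=H^1(C/Q,Q/\langle z\rangle)=0$.

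\emph{The consequence.} Let $\alpha\in C_{\aut(C)}(T)$ with $T\in\Syl_2(G)$. Since $z$ is $2$-central, $T\in\Syl_2(C)$. As $Q\le T$, $\alpha$ centralizes $Q=F^*(C)$, so by Lemma~\ref{lemmaone}a $\alpha$ is a $2$-element with $[\alpha,C]\le C_C(Q)=\langle z\rangle$. Passing to $\bar C:=C/\langle z\rangle$, in which $\bar Q:=Q/\langle z\rangle=O_2(\bar C)=F^*(\bar C)$ is elementary abelian, the vanishing $H^1(\bar C/\bar Q,\bar Q)=0$ and Lemma~\ref{lemmaone}c give $C_{\aut(\bar C)}(\bar Q)=\Aut_{\bar Q}(\bar C)$; as $\bar\alpha$ centralizes $\bar T\supseteq\bar Q$ it is conjugation by some $\bar q\in C_{\bar Q}(\bar T)=Z(\bar T)$ (note $Z(\bar T)\le C_{\bar C}(\bar Q)=\bar Q$). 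Since $C$ is perfect, $\aut(C)$ embeds in $\aut(\bar C)$, so $\alpha$ is itself conjugation by an element $q\in Q$; but $\alpha$ centralizes $T$, so $q\in C_Q(T)=\langle z\rangle\le Z(C)$, whence $\alpha=1$. (Equivalently, since $F^*(C)=Q$ is a $2$-group and $Z(T)=\langle z\rangle\le Z(C)$, Lemma~\ref{lemmaone}b applied to $X=C$ gives $C_{\aut(C)}(T)=\Aut_{Z(T)}(C)=1$ directly.)

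\emph{Main obstacle.} The one genuinely non-formal step is the choice of $x$ in the middle part, and above all the verification that the chosen class of $2'$-elements is a single connected component of the graph of Lemma~\ref{lemmafour} (or, what suffices, that the centralizers of the elements of its component generate $L$). This rests on the explicit module structure --- the $Co_1$-action on $\Lambda/2\Lambda$, resp.\ the $2$-modular character table of $Sp_6(2)$ --- together with enough of the $3$- or $5$-local structure of $L$; everything else is bookkeeping.
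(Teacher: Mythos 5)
Your overall strategy is exactly the paper's: reduce to $H^1(L,V)=0$ for $(L,V)=(Co_1,\Lambda/2\Lambda)$ or $(Sp_6(2),\text{spin})$, apply Lemma~\ref{lemmafour} with a fixed-point-free element $x$ of order $3$, and then get $C_{\aut(C)}(T)=1$ from Lemma~\ref{lemmaone}. The "consequence" paragraph is fine (and your parenthetical remark is right that Lemma~\ref{lemmaone}b alone, via $Z(T)=\langle z\rangle\le Z(C)$, already gives $C_{\aut(C)}(T)=1$; the $H^1$ statement is what is needed elsewhere, via Lemma~\ref{lemmaone}c). But the step you defer with ``granting that such an $x$ lies in a single connected graph component'' is not a loose end to be granted: it is essentially the entire content of the paper's proof, and as written your argument does not establish the corollary. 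Note also that connectivity of $x^L$ is not literally what Lemma~\ref{lemmafour} asks for; one needs either that $L$ normalizes the component $\Gamma$ of $x$ or that $L=\langle C_L(y)\,|\,y\in\Gamma\rangle$, and since $C_L(y)\le N_L(\Gamma)$ for $y\in\Gamma$, the practical task is to show $N_L(\Gamma)=L$.

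Concretely, here is what has to be supplied. For $L\cong Sp_6(2)$ the paper observes that $[x,V_{\mathrm{nat}}]$ is $2$-dimensional in the natural $6$-dimensional module, so $x$ has a conjugate $y$ with $[x,V_{\mathrm{nat}}]\perp[y,V_{\mathrm{nat}}]$, whence $x$ and $y$ are joined in the graph and $N_L(\Gamma)\ge\langle E(C_L(x)),E(C_L(y))\rangle=L$ --- short, but it uses the natural module, not the spin module on which $x$ is fixed-point-free. For $L\cong Co_1$ the verification is a page of local analysis: one takes $x$ with $N_L(\langle x\rangle)\cong 3Suz2$, locates $A=J(P)\cong E_{3^6}$ with $x\in A=C_L(A)$ and $\Aut_L(A)\cong 2M_{12}$, shows $N_L(A)\le N_L(\Gamma)$ and more generally $N_L(A_0)\le N_L(\Gamma)$ for a suitable $A_0\le A$ of order $9$ with $O^2(C_L(a))\cong 3^2U_4(3)$, then feeds in normalizers of elements of orders $5$, $7$ and of $2$-central involutions, and finally forces $|L:N_L(\Gamma)|=1$ by divisibility and congruence conditions. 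None of this is ``bookkeeping'' in the sense of being routine from the module structure alone; without it (or some substitute, e.g.\ a reference to the known $1$-cohomology of these modules) the proof is incomplete, most seriously in the $F_1$ case.
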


\begin{proof}
  Set $H_z=C_G(z)/O_2(C_G(z))$ and
  $V_z=O_2(C_G(z))/\<z>$. In these cases,
  $(C_G(z)/O_2(C_G(z)),\dim V)=(Co_1,24)$ or $(Sp_6(2), 8)$
  and $V=\Lambda/2\Lambda$ or the spin module,
  respectively. In either case $H_z$ possesses an element
  $x$ of order $3$ such that $C_V(x)$ is trivial. We let $C$
  be the connected component as defined in Lemma
  \ref{lemmathree}, and $N=N_{H_z}(C)$. By Lemma
  \ref{lemmathree}t suffices to prove that $N=H_z$.

  If $G=Co_2$, then $\dim([x,V_{nat}])=2$ for the natural
  $6$-dimensional module $V_{nat}$ for $Sp_6(2)$. But in that case $x$
  has an $H_z$-conjugate $y$ such that $[x,V_{nat}]\perp[y,V_{nat}]$,
  and then $N\ge\<C_{H_z}(x),C_{H_z}(y)>\ge
  \<E(C_{H_z}(x)),E(C_{H_z}(y))>=H_z$, as required. 

  If $G=F_1$, then $H_z\cong Co_1$ and $N_{H_z}(\<x>)\cong
  3Suz2$, and there is $P\in\Syl_3(H_z)$ such that
  $A:=J(P)\cong E_{3^6}$, $x\in A=C_{H_z}(A)$, and
  $\Aut_{H_z}(A)\cong 2M_{12}$. Moreover, $N_{H_z}(A)\cap
  N_{H_z}(\<x>)\cong Z_2\times M_{11}$. Since
  $\<x^{N_{H_z}(A)}>=A$ is abelian, $N_{H_z}(A)\le
  N$. Indeed $N_{H_z}(A_0)\le N$ for any
  $A_0\le A$ such that $C\cap A\ne\emptyset$. Such an $A_0$
  of order $9$ exists with $O^2(N_G(A_0))=O^2(C_G(a))\cong
  3^2U_4(3)$ for every $a\in A_0^\#-C$ \cite[Table
  5.3l]{I:A}. We fix $A_0$. Then for all $a\in A_0^\#$,
  either $\<a>\in C$ or $A_0\nor N_{H_z}(\<a>)$. Hence
  setting $\Gamma=\<N_{H_z}(\<a>)\,|\,a\in A_0^\#>$, we have
  $\Gamma\le N$. We also fix $a\in  A_0-C$.  
  
  We choose any elements $x_i\in O^2(C_{H_z}(a))\cong
  3^2U_4(3)$ of order $i$ for $i=5, 7$. Then $C_{H_z}(x_i)$
  contains $A_0$, and hence exactly four elements of $C$. Therefore
  $x_5$ and $x_7$ are of class $5B$ and $7B$, respectively
  \cite[Table 5.3l]{I:A}, and it follows quickly that
  $N_{H_z}(\<x>_i)\le \Gamma\le N$. Next, let $t\in H_z$ be
  any $2$-central involution centralizing any
  element $c\in C$; such elements exist, with $C_{H_z}(ct)$
  an extension of $2^{1+6}_-$ by $\Omega_6^-(2)$ and
  $C_{H_z}(t)$ an extension of $2^{1+8}_+$ by
  $\Omega_8^+(2)$. Thus there is a conjugate $c'\in
  c^{C_{H_z}(t)}$ of $c$ such that $[c,c']=1$ and
  $C_{H_z}(t)\le \<C_{H_z}(c),C_{H_z}(c')>$. Hence
  $C_{H_z}(t)\le\Gamma\le N$. 

  Now $N\ge N_{H_z}(X)$ for $X=A,\<x_5>,\<x_7>,$ and $\<t>$,
  as well as $\<c>$ for any $c\in C$. Together these imply
  that $|H_z:N|$ divides $5\cdot23$. But also
  $|H_z:N|\equiv1\mod{33}$, since $N\ge N(A)\ge N(P)$ and
  Sylow $11$-normalizers of $N_G\<x>\le N$ are Sylow
  $11$-normalizers in $G$. Therefore $|H_z:N|=1$ and the
  proof is complete.
\end{proof}

\section{Group by Group Calculations}

\vskip-5pt
\grp{M_{11}}[Conway:ThreeLec]

Since $G$ is sharply quadruply transitive on $11$ letters, the
stabilizer $G_{\alpha\beta}$ of two points is
sharply doubly transitive -- i.e., a Frobenius group -- of order
$9.8$. Then $M:=N_G(G_{\alpha\beta})$ satisfies $|M|=2|G_{\alpha}|$
and $M$ is easily seen to be a Sylow $3$-normalizer in $G$. Since
$|G:M|$ has no nontrivial divisors that are congruent to $1\pmod3$,
$M$ is maximal in $G$. By a Frattini argument, 
$$
\wt A=\wt N(M).
$$
$M$ is complete ($\Aut(M)=\Inn(M)\cong M$) so $N_G(M)=MC_G(M)$ and
$$
\wt N(M)=\wt C(M).
$$ 

Since $T\le M$, $C_A(M)\le C_A(T)$. Let $\alpha\in C_A(M)$. Then $\alpha$
maps into $C_{\Aut(C)}(T)$. But $C_G(Z)\cong GL_2(3)$. By Lemma
\ref{lemmaone}b, $\alpha$ centralizes $C_G(Z)$. Since $M$ is maximal in
$G$, $G=\<C_G(Z),M>$ is centralized by $\alpha$, so $\alpha=1$,
proving that

\vskip-3pt
\finaldisplay{$\wt C(M)=1$.}

\grp{M_{12}}[Conway:ThreeLec]

We show first that $G$ has at most two conjugacy classes of
$M_{11}$-subgroups.

Suppose that $H$ and $K$ are nonconjugate $M_{11}$-subgroups of
$G$. Then $K$ has no fixed point on $G/H$. Since
$|G:H|=|G:K|=12$ and $K$ has an element of order $11$, $HK=G$.
Therefore $|K:H\cap K|=12$. Consequently $H$ and $K$ share no Sylow
$3$-subgroup. We have proved that $M_{11}$-subgroups of $G$ sharing a
Sylow $3$-subgroup are conjugate, as are any two $M_{11}$-subgroups
whose product is not $G$. 

Now $H$ has a Sylow $3$-subgroup of order $3^2$, all of whose
nonidentity elements are fused in $H$. But in $G$, which has a Sylow
$3$-subgroup of type $3^{1+2}$, there are at most $2$ (exactly two, in
fact) conjugacy classes of subgroups of order $3^2$ all of whose
nonidentity elements are fused in $G$. By the previous paragraph,
there are at most two conjugacy classes of $M_{11}$ subgroups in
$G$. Thus
$$
|\wt A:\wt N(M)|\le 2.
$$
Since $M\cong M_{11}$ and we have just seen that $\Out(M_{11})=1$,
$$ 
\wt N(M)=\wt C(M).
$$ 
$M$ is doubly transitive, hence primitive, on $G/M-\{M\}$. By Lemma
\ref{lemmathree}(d1),
$$ 
\wt C(M)=1.
$$
Finally, for any $\alpha\in \wt C(T)$, $T_M\le M\cap M^\alpha$,
whence $|M:M\cap M^\alpha|$ is odd. By the second paragraph,
$M^\alpha$ is $G$-conjugate to $M$. Thus, $\alpha\in GN_A(M)$, and so
\finaldisplay{$\wt C(T)\le \wt N(M)=\wt C(M)=\wt 1$.}
\medskip

\begin{exnote}
  A subgroup $H=M_{12}.2$ with $F^*(H)=M_{12}$ is visible in
  $M_{24}$ as the stabilizer of a decomposition of the
  underlying $24$-element set into two complementary
  dodecads of the Golay code, proving that the bound $|\wt
  A|\le 2$ is sharp and exhibiting both of the
  quasi-equivalent transitive actions of $M_{12}$ on $12$
  points \cite{Conway:Co}.
\end{exnote}

\grp{M_{22}}[Conway:ThreeLec]

Since $M_{24}$ is quintuply transitive on $24$
points and preserves a Steiner system $S(5,8,24)$, its three-point
stabilizer $M_{21}$ is doubly transitive on $22$ points and preserves
a Steiner system $S(2,5,21)$, i.e. the projective plane of order $4$. 

The maximal parabolic subgroup $P=M_{20}\cong ASL_2(4)$ of $M_{21}\cong L_3(4)$
has no faithful permutation representation of degree less than
$16$. (A point stabilizer $P_\alpha$ would satisfy $1<O_2(P)\cap
P_\alpha<O_2(P)$ so would be reducible on $O_2(P)$, contain no element
of order $5$, then be of index $10$; but $3$-elements of $P$ leave no
hyperplane of $O_2(P)$ invariant.)  As a result, if $M_{21}$ acts on
any set of cardinality $22$, it must be transitive or have a fixed
point; the latter must in fact hold since $11$ does not divide
$|M_{21}|$. Consequently all $M_{21}$-subgroups of $M_{22}$ are
$M_{22}$-conjugate and
$$
\wt A=\wt N(M).
$$ 
As $\Out(M)$ is rather large -- of order $12$ -- we bring other
subgroups to bear as well. $M$ has exactly two conjugacy classes of
(maximal parabolic) subgroups isomorphic to $2^4A_5$, represented by
$N_0$ and $N_1$, say, and such that if we put $Q_i=O_2(N_i)$, then
$N_G(Q_0)/Q_0\cong A_6$ and $N_G(Q_1)/Q_1\cong \Sigma_5$. Therefore
$Q_0$ and $Q_1$ are not $A$-conjugate.  Thus if we set $N_{Q_0}=N_A(M)\cap
N_A(Q_0)$, we have
$$ 
\wt N(M)=\wt N_{Q_0} \text{ and } \Aut_G(Q_0)\cong A_6.
$$ 

Set $C_{Q_0}=C_{N_{Q_0}}(Q_0)$ and $B=\Aut_G(Q_0)\Aut_{N_{Q_0}}(Q_0)$. As $G\nor A$, we
have $\Aut_G(Q_0)\nor B$.  This forces\footnote{$\Aut(Q_0)\cong
GL_4(2)\cong A_8$. Then $\Aut(Q_0)$ and $\Aut_G(Q_0)\cong A_6$ share a
Sylow $3$-subgroup $P$. One calculates $|N_{\aut_G(Q_0)}(P):P|=4$ and
$|N_{\aut(Q_0)}(P):P|=8$. By a Frattini argument $B\le
\Aut_G(Q_0)N_{\aut(Q_0)}(P)$, so $|B:\Aut_G(Q_0)|\le
|N_{\aut(Q_0)}(P):N_{\aut_G(Q_0)}(P)|=2$.} $|B:\Aut_G(Q_0)|\le
2$. Moreover $B/\Aut_G(Q_0)\cong \wt N_{Q_0}/\wt C_{Q_0}$, so 
$$ 
|\wt N_{Q_0}/\wt C_{Q_0}|\le 2.
$$ 

But $\Aut(M)\cong P\Gamma L_3(4)\<\tau>$, where $\tau:D\mapsto
(D^T)\inv$ for projective images $D$ of $3\times 3$ matrices. By a
direct calculation, we see that $C_{\aut(M)}(Q_0)=1$, so
$$ 
\wt C_{Q_0}=\wt C(M).
$$ 

$M$ is doubly transitive, hence primitive, on $G/M-\{M\}$. By Lemma
\ref{lemmathree} (condition (d1) holds),
$$ 
\wt C(M)=1.
$$ 

Finally, let $\beta\in C_A(T)$. Then $\beta$ centralizes $Q_0$ and the
other $E_{2^4}$-subgroup $Q_1$ of $T_M\in\Syl_2(M)$. We have
$Q_i=F^*(N_G(Q_i))$, $i=0,1$, so $\beta$ centralizes
$N_G(Q_i)/Q_i$. Thus $\beta$ normalizes every overgroup of $Q_i$ in
$N_G(Q_i)$. In particular $\beta$ normalizes $\<N_M(Q_0),N_M(Q_1)>=M$.
Thus $\beta\in C_{Q_0}$ and 

\finaldisplay{$ 
\wt C(T)=\wt C_{Q_0}=\wt C(M)=1.
$}
\medskip
\begin{exnote}
 The setwise stabilizer of a two-point set in the
natural action of $M_{24}$ on $24$ letters is a group $H=M_{22}.2$
with $F^*(H)=M_{22}$, so the bound $|\Out(M_{22})|\le 2$ is sharp.

Since $\wt A=\wt N_{Q_0}$, $N_{\aut(M_{22})}(M)=M\<\sigma>$, where
$\sigma$ is a field automorphism of $M\cong L_3(4)$. Hence
$\Aut_{M_{22}}(Q_1)=\Aut_{\aut(M_{22})}(Q_1)$ and
$O_2(N_{\aut(M_{22})}(Q_1)\cong E_{2^5}$. On the other hand as $\wt
N_{Q_0}/\wt C_{Q_0}$ has order $2$, $\Aut_{\aut(M_{22})}(Q_0)\cong
N_{A_8}(A_6)\cong \Sigma_6$. 
\end{exnote}

\grp{M_{23}}[Conway:ThreeLec]

Since a Sylow $2$-subgroup of $M_{21}$ has no faithful
permutation representation of degree less than $16$, the same is true
for $M_{22}$. Nor can $M_{22}$, by its order, act transitively on $23$
letters. Therefore as in the $M_{22}$ argument above, any subgroup of
$G$ isomorphic to $M\cong M_{22}$ has a fixed point on $G/M$, so is
conjugate to $M$, and
$$
\wt A=\wt N(M).
$$

Let $Q_i\le M$, $i=0,1$, be the same subgroups as in the $M_{22}$
analysis, so that $N_i:=N_M(Q_i)$ satisfies $F^*(N_i)=Q_i$,
$N_0/Q_0\cong A_6$, and $N_1/Q_1\cong\Sigma_5$. Then
$N_i^*:=N_{M_{23}}(Q_i)$ satisfies $F^*(N_i^*)=Q_i$ and $\Aut_{M_{23}}(Q_i)\cong
A_7$ or $\Sigma_6$ according as $i=0$ or $1$. Since any subgroup of
$\Aut(Q_0)\cong A_8$ isomorphic to $A_7$ is self-normalizing in $A_8$,
and $\Aut_{M_{23}}(Q_0)\nor \Aut_A(Q_0)$, we have
$\Aut_A(Q_0)=\Aut_G(Q_0)$. But we saw in the $M_{22}$ analysis that
$\Aut_{\aut(M_{22})}(Q_0)\cong\Sigma_6$, which does not embed in
$A_7$. Consequently $\Aut_A(M)=\Inn(M)$, so 
$$ 
\wt N(M)=\wt C(M).
$$ 

$M$ is doubly transitive, hence primitive, on $G/M-\{M\}$. By Lemma
\ref{lemmathree} (condition (d1) holds),

\finaldisplay{$\wt C(M)=1$.}

\grp{M_{24}}[Conway:ThreeLec]

Let $\alpha\in A$ and $H=M^{\alpha}$, and suppose by way of contradiction
that $H$ is not $G$-conjugate to $M$. Fix a series
$G>M=M_{23}>M_{22}>M_{21}>M_{20}$ and let $H_n=H\cap M_n$,
$n=20,21,22,23$. We claim that $|M_n:H_n|=24$ for each such $n$. It is
sufficient to show by descending induction that
$H_n$ is transitive on $M_n/M_{n-1}$ for $n\ge 21$. Since $H$ has no
fixed points on $G/M$ and contains an element of order $23$, the claim
holds for $n=24$. For other values of $n$, the orbit
lengths of $H_n$ on $M_n/M_{n-1}$ are restricted by the  conditions $H_n$,
$n\ge 21$, contains elements $g_p$ of orders $p=5$, $7$, $11$, and $23$, 
with $4$, $3$, $2$, and $1$ fixed points respectively on $G/M$. Thus,
$g_{23}$ implies transitivity for $n=23$; $g_{11}$ and $g_7$ for
$n=22$; and $g_7$ and $g_5$ for $n=21$. Now $M_{20}\cong
ASL_2(4)$ and $|M_{20}:H_{20}|=24$. But no such subgroup $H_{20}$ of
$M_{20}$ exists; we would have $H_{20}\cap O_2(M_{20})\ne 1$ and $5$
divides $|H_{20}|$, whence $O_2(M_{20})\le H_{20}$ and
$|M_{20}:H_{20}|=12$, contradiction. 

Therefore $H$ is $G$-conjugate to $M$, and 
$$
\wt A=\wt N(M).
$$

Since $\Out(M)=1$, 
$$ 
\wt N(M)=\wt C(M).
$$ 

$M$ is doubly transitive, hence primitive, on $G/M-\{M\}$. By Lemma
\ref{lemmathree} (condition (d1) holds),

\finaldisplay{$\wt C(M)=1$.}

\grp{HS}[HigmanSims:HS]

Let $\alpha\in \Aut(G)$, set $H=M^\alpha$, and assume
that $H$ is not $G$-conjugate to $M$. 

$G$ is a rank $3$ permutation group on $M$, and the $M$-orbits on
$G/M-\{M\}$ are of orders $22$ and $77$. Since $1+22$ does not divide
$|G:M|=100$, this action of $G$ on $100$ points is primitive.
Elements of $H$ of order $7$ or $11$ have $G$-conjugates
$g_7,g_{11}\in M$ by Sylow's Theorem. Then $g_7$ has one fixed point
on the $22$-orbit and none on the $77$-orbit; $g_{11}$ has no fixed
points on either of these orbits.  Hence $g_7$ and $g_{11}$ fix two
and one points, respectively, of $G/M$. Therefore there is an
$H$-orbit $\Psi$ on $G/M$ such that $|\Psi|\equiv1\pmod 11$. Since
$|\Psi|$ divides $|H|$ and $|\Psi|\equiv0$, $1$, or $2\pmod7$, the
only possibility is $|\Psi|=56$.

However, $M_{22}$ has no transitive action of degree $56$. For if it
did, then a point stabilizer $K$ in $M_{22}$ would have order
$2^4.3^2.5.11$ and contain a Sylow $p$-normalizer of $M_{22}$ for both
$p=5$ and $p=11$. Hence for any $g\in M_{22}-K$, $K_0:=K\cap K^g$
would have order dividing, and then equaling, $2^4.3^2$. No $3$-local
subgroup of $M_{22}$ contains a group of order $16$, so
$F^*(K_0)=O_2(K_0)$, and the only possibility admitting a group of
order $9$ is $O_2(K_0)\cong E_{2^4}$. Therefore $K$, and $K\cap
M_{21}$, would have elementary abelian Sylow $2$-subgroups. But since
$11$ divides $|K|$, $|K\cap M_{21}|=2^a.3^2.5$, where $a=3$ or $4$. No
such subgroup of $M_{21}$ exists apart from $A_6$, which has
nonabelian Sylow $2$-subgroups.

Therefore we have a contradiction, and
$$ 
\wt A=\wt N(M).
$$ 
As $M\cong M_{22}$, 
$$ 
\wt N(M)/\wt C(M)\le\Out(M)\cong Z_2.
$$
We have seen that $C_{\aut(M_{22})}(M_{21})=1$, and the action of $M$
on the $22$-orbit is primitive. Hence condition (d2) of Lemma
\ref{lemmathree} holds, and so
$$
\wt C(M)=1.
$$
Finally let $\beta\in C_A(T)$. Let $Q_0,Q_1\le T_M$ be as in the
$M_{22}$-analysis. Since $m_2(G)=4$ (visible from the $4^3L_3(2)$
$2$-local subgroup), and $N_G(Q_i)$, like $N_M(Q_i)$, is irreducible
on $Q_i$, we have $Q_i=\Omega_1(O_2(N_G(Q_i)))$. Since $N_M(Q_i)$
contains an element of order $5$, either $O_2(N_G(Q_i))=Q_i$ or
$|O_2(N_G(Q_i))|\ge2^8$. But $|T|=2^9$ and $|N_M(Q_i)/Q_i|=2^3$, so
$Q_i=O_2(N_G(Q_i))$. By Lemma \ref{lemmaone}e, $\alpha$ centralizes
$N_G(Q_i)/Q_i$, so normalizes $N_M(Q_i)$, $i=0,1$. As $M=
<N_M(Q_0),N_M(Q_1)>$, $\alpha$ normalizes $M$. By the $M_{22}$
analysis, $[\alpha,M]=1$. Hence $C_G(\alpha)\ge \<M,T>=G$, $\alpha=1$,
and 

\finaldisplay{$\wt C(T)=1$.}
\medskip

\begin{exnote}
The original construction of $HS$ \cite{HigmanSims:HS} constructed a group
$X$ with $F^*(X)=HS$ and $|X:F^*(X)|=2$. The full automorphism group
also appears as $C_{F_5}(t)/\<t>$ for certain involutions $t\in
F_5$. (The group $O_2(C_{\aut(F_5)}(t))$ is cyclic of order $4$.)
\end{exnote}

\grp{J_1}[Janko:J1]

As $T\in\Syl_2(G)$, a Frattini argument gives
$$
\wt A=\wt N(M).
$$
Since $M=N_G(T)$ is complete, 
$$
\wt N(M)=\wt C(M).
$$
Let $\alpha\in C_A(M)$. For any involution $x\in M$, $C_M(x)\cong
Z_2\times A_4$, so the action of $\alpha$ on $C_G(x)\cong Z_2\times
A_5$ lies in $C_{\aut(Z_2\times A_5)}(Z_2\times A_4)=1$. Therefore
$\alpha$ centralizes $H:=\<N_G(T),C_G(x)\,|\,x\in T^\#>$. By the
Bender-Suzuki Theorem, $G$ has no strongly embedded subgroup. But $H$
is strongly embedded in $G$ unless $H=G$, which must therefore be the
case. Then $\alpha=1$ and

\finaldisplay{$\wt C(M)=1$.}
\medskip

\grp{J_2,\ J_3}[Janko:J2J3,HallWales:J2,HigmanMcKay:J3]

For both $G=J_2$ and $G=J_3$, $M=C_G(Z)\cong
2^{1+4}_-\Omega_4^-(2)=C_G(Z)$. As $Z$ is a Sylow center, 
$$
\wt A=\wt N(M).
$$
We have $F=F^*(M)=O_2(M)$. By Lemma \ref{lemmaone}d, $N_A(M)\cap
C_A(F)=C_A(M)$. Therefore as $\Out(F)\cong O_4^-(2)$,
$$
\wt N(M)/\wt C(M)\le \Out(F)/\Omega_4^-(2)\cong Z_2.
$$
Finally, all subgroups of $F$ of order $2$ are $G$-conjugate to
$Z$. By a standard result on $p$-groups (10.11 of \cite{I:G}), there

\noindent
is $U\cong E_{2^2}$ such that $U\le F$ and $U\nor T$. Thus Lemma
\ref{lemmatwo} applies to give its conclusions $(1)$ and $(2)$. Moreover,
in \cite{Janko:J2J3}, Z. Janko characterized the orders of the two groups
$J_2$ and $J_3$ (and much more) by those conclusions $(1)$ and $(2)$,
and the structure of $C(Z)$. Although two groups emerge, neither of
$J_2$ and $J_3$ contains the other, by Lagrange's Theorem. Hence $W=G$
in the language of Lemma \ref{lemmatwo}, and so

\finaldisplay{$
\wt C(M)\le \wt C(T)=1.
$}
\medskip
\begin{exnote}
Constructions of $J_2$, e.g.  as a rank $3$ permutation group
\cite{HallWales:J2},
and of $J_3$ \cite{HigmanMcKay:J3}, construct a non-inner automorphism
as well. $\Aut(J_2)$ is visible in a number of groups, for example as
the fixed points of a non-inner automorphism of
$Suz$. As a pariah, $J_3$ is not obviously on view
elsewhere, and as proved by Griess is not involved in $F_1$.
\end{exnote}

\grp{J_4}[Janko:J4]

$G$ is of characteristic $2$ type, and $Z(T)$ is cyclic. Therefore as
$M=[C,C]$,
$$
\wt A=\wt N(C)=\wt N(M).
$$

$O_2(C)/Z\cong E_{2^{12}}$ is an absolutely irreducible (faithful)
$6$-dimensional module for $O^2(C)/O_2(C)\cong 3M_{22}$ over
$\F_4$. Therefore $\Aut_C(O_2(C))$ is self-normalizing in
$\Aut(O_2(C))$. By Lemma \ref{lemmaone}e,d applied to $C$, $M$, respectively,
$$
\wt N(C)\le \wt C(M).
$$

Let $\alpha\in C(M)$ be of prime order $p$. If $p$ is odd then
$[\alpha,C]=1$ by Lemma \ref{lemmaone}a and then Corollary
\ref{charptype} gives a contradiction. So $p=2$. Choose $x\in C$ of
order $11$ and let $N=N_G(\<x>)$. Then $R:=F^*(N)\cong 11^{1+2}$ and
$N\cap O_{2,3}(M)\cong SL_2(3)$ is absolutely irreducible on
$R$. Therefore $\alpha$ centralizes or inverts $R/\Phi(R)$. Replacing
$\alpha$ by $\alpha z$ if necessary we may assume that $[\alpha,
R]=1$, whence $N\le H:=C_G(\alpha)$ by Lemma \ref{lemmaone}a. As $M\le
H$, we now have $|G:H|$ dividing $2.23.29.31.37.43$. Moreover by
Corollary \ref{charptype}, $H$ is strongly $11$-embedded in $G$, whence
$|G:H|\equiv1\pmod{11^3}$. These conditions imply that $|G:H|=1$, so

\finaldisplay{$\wt C(M)=1$.}

\grp{Co_1}[Conway:Co] 

The action of $G=Co_1$ on the Leech
lattice $\Lambda$ gives an absolutely irreducible embedding $G\le
PSL_{24}({\bf Z})\le PGL_{24}(\Q)$. We identify $Co_1$ with its image. This
is the only complex irreducible projective representation of $G$ of
degree at most $24$. Therefore any automorphism $\alpha\in A$ is
realized as conjugation by some element $a\in N_{GL_{24}(\Q)}(G)$. Let
$V=\Q^{24}$, and let $\Lambda\subseteq V$ be a copy of the Leech
lattice invariant under $Co_0=2Co_1$. Then $\Q\Lambda=V$, and we may
consider $a\in GL(V)$. Hence $\Lambda^a\subseteq V$, and
$\Lambda/\Lambda^a\cap \Lambda$ is finite and
$Co_0$-invariant. However, the representation of $Co_0$ on $\Lambda$
has the property that it remains irreducible $\pmod p$ for every prime
$p$, i.e., $Co_0$ is irreducible on $\Lambda/p\Lambda$ for every prime
$p$. It follows that $\Lambda^a\cap \Lambda=n\Lambda$ for some integer
$n$, and similarly $\Lambda^a\cap \Lambda=m\Lambda^a$ for some
$m$. Therefore modifying $a$ by a rational scalar we may achieve
$\Lambda^a=\Lambda$. By absolute irreducibility there is a unique
bilinear symmetric nondegenerate rational $G$-invariant form $f$ on
$\Lambda$, up to scalars, and there is a unique one which is
unimodular on $\Lambda$. Therefore $a$ preserves $f$, whence
$a\in\Aut(\Lambda)$, which by definition is $Co_0$. Therefore 
\finaldisplay{$\wt A=1.$}

\grp{Co_2}[Conway:Co]

The action of $M/O_2(M)\cong Sp_6(2)$ makes the Frattini quotient of
$O_2(M)$ the spin module, which is absolutely irreducible. By Corollary
\ref{somehones} and Lemma \ref{lemmaone}b,
$$
\wt A=\wt N(M)=\wt C(M)=\wt C(T), 
$$

It remains to show that $\wt C(M)=1$. Let $\alpha\in C(M)$ be of prime
order $p$. As $G$ is of characteristic $2$ type, $p=2$ by Corollary
\ref{charptype}.  Let $t\in T$ be an involution in class $2B$, and
extremal in $T$. Then $F^*(C_G(t))=O_2(C_G(t))$, and $Z(O_2(C_G(t)))$
is the direct product of a trivial module and a natural module for
$C_G(t)/O_2(C_G(t))\cong A_8$. Hence $Z(C_T(t))=\<t>Z$. By Lemma
\ref{lemmaone}b, $\alpha\downarrow
_{C_G(t)}\in\Aut_{Z(C_T(t))}(C_G(t))=\Aut_{\<z>}(C_G(t))$. Replacing
$\alpha$ by $\alpha z$ if necessary we may assume that
$H:=C_G(\alpha)$ contains $\<C,C_G(t)>$. The subgroups of $C$ and
$C_G(t)$ of order $5$ are not $G$-conjugate so a Sylow $5$-subgroup
$F$ of $H$ is not cyclic. In particular some $x\in H$ of order $5$ is
$5$-central in $G$. But $R:=F^*(C_G(x))=O_5(C_G(x))\cong
5^{1+2}$. Since $[\alpha,F]=1$ and $\alpha$ has order $2$, $[\alpha,
R]=1$ and then $N_G(\<x>)\le H$. Thus, $|G:H|$ divides $3^4.11.23$ and
$|G:H|\equiv 1\pmod{5^2.7}$. The last holds because $C$ is strongly
$7$-embedded in $G$ and because $\<x>$ is weakly closed with
$N_G(\<x>)\le H$, with $R\cong 5^{1+2}$. But there are no such numbers
$|G:H|>1$, so $H=G$ and 
\finaldisplay{$
\wt C(M)=1.
$}

\grp{Co_3}[Conway:Co]

In $Co_3$ there are two classes of
involutions, with centralizers $M=2Sp_6(2)$ and $M_1=Z_2\times
M_{12}$, which we may assume are chosen so that $T\in \Syl_2(M)$ and
$TY_1:=T\cap M_1\in\Syl_2(M_1)$. In particular there is a unique class
of $2$-central involutions. Since $Sp_6(2)$ is complete,
$$
\wt A=\wt N(M)=\wt C(M). 
$$
Let $\alpha\in C(T)$. Then $\alpha$ normalizes $M=C_G(Z)$, and so acts
on $M$ like an element of $Z(T)$, as $T/Z$ is self-centralizing in the
$\chev(2)$-group $M/Z$. But $Z(T)=Z\le Z(M)$, so $C(T)=C(M)$. 

Now $\alpha$ acts on $M_1$ and $\alpha$ centralizes $T_1$.
The restriction of $\alpha$ to $M_1$ therefore lies in
$C_{\aut(M_1)}(T_1)=\Aut_Z(M_1)$, as we have seen in the
$M_{12}$-case. Replacing $\alpha$ by $\alpha z$, where
$Z=\<z>$, we may assume that $[\alpha, M_1]=1$. Set
$H=C_G(\alpha)$. Thus $\<M,M_1>\le H$. There is $R\le G$
such that $R=C_G(R)\cong E_{3^5}$ and $N_G(R)=RL$, $L\cong
Z_2\times M_{11}$. Replacing $R$ by a conjugate we may
assume that $L\le M_1$. Then any involution $z\in L$ is
$M_1$-conjugate to $z$, so $H\ge \<C_G(t)\,|\,t\in L,\
t^2=1\ne t>\ge R$. Also $M$ and $M_1$ have non-conjugate
Sylow $5$-subgroups so $5^2$ divides $|H|$. As $|M|$,
$|M_1|$, and $|RL|$ divide $|H|$, $|G:H|$ divides
$5.23=115$. But $R=J(P)$ for $P\in\Syl_3(G)$, so
$|G:H|\equiv1\pmod3$. If $\alpha\ne1$ it follows that
$|G:H|=5.23$. But $\alpha$ acts on $W:=O_5(C_G(x))$ for
$x\in M$ of order $5$, and $W\cong 5^{1+2}$, with
$|C_W(\alpha)|=|H\cap W|=5^2$, $C_G(W)=Z(W)$, and $N_G(W)/W$
of order $48$ and irreducible on $W/Z(W)$. On the one hand,
$|C_W(\alpha)|=|H\cap W|=5^2$ forces $\alpha$ to be a
$5$-element. Then on the other hand, $\alpha$ centralizes
$N_G(W)/W$, which is irreducible on $W/Z(W)$ and so cannot
normalize $C_W(\alpha)$, a contradiction. Therefore
$\alpha=1$ and so \finaldisplay{$ \wt C(M)=1=\wt C(T).  $}

\grp{Mc}[McL:McL]

$M=C_G(Z)$ so 
$$
\wt A=\wt N(M).
$$
$M\cong 2A_8$ so 
$$
|\wt N(M)/\wt C(M)|\le |\Out(M)|=2.
$$ 

Let $\alpha\in C_A(M)$. Choose any $Q\le T$ such that$Q\cong E_{2^4}$
and $N_T(Q)\in\Syl_2(N_G(Q))$. Then $\alpha$ normalizes $Q$ and
$N_G(Q)$, and $\Aut_G(Q)\cong A_7$. Since $A_7$ is maimal in
$\Aut(Q)\cong A_8$, and since $H^1(A_7,Q)=1$, Lemma \ref{lemmaone}c implies
that $[\alpha,N_G(Q)]=1$. Therefore $C_G(\alpha)$ contains
$\<M,N_G(Q)>>M$. But $\<M,N_G(Q)>=G$, so $\alpha=1$ and
\finaldisplay{$\wt C(M)=1.$}

\medskip
\begin{exnote}
$\Aut(Mc)$ is visible in $Co_1$ as the stabilizer $\cdot223$
of a certain triangle in the Leech lattice \cite{Atlas}. It is also
visible in $Ly$ as $N_{Ly}(V)/V$ for a certain subgroup
$V\le Ly$ of order $3$ \cite{Lyons:LyIE}.
\end{exnote}

\grp{O'N}[ONan:ON,Sims:ON,Andrilli:ON,JansenWilson:ON]

Since $M=C_G(Z)$, 
$$
\wt A=\wt N(M).
$$
Also $|\Out(M)|=4$ and $|\Aut(M):\Aut_G(M)|=2$, so 
$$ 
\wt N(M)/\wt C(M)\le \Out(M)/\Out_G(M)\cong Z_2.
$$ 

We have $T\le M$. Let $\alpha\in C_A(M)\le C_A(T)$ and set
$C_\alpha=C_G(\alpha)$. There is $M_1\le G$ such that $T\le M_1$ and
$F^*(M_1)\cong Z_4\times Z_4\times Z_4$. By Lemma \ref{lemmaone}b,
$[\alpha,M_1]=1$. 

We claim that $C_G(t)\le C_\alpha$ for every involution $t\in
C_\alpha$. To do this it is enough to consider only those $t\in
T$. Since $M_1=[M_1,M_1]$, we have $O^2(C_\alpha)=C_\alpha$, and so by
the Thompson Transfer Lemma (15.16 of \cite{I:A}), $t$ has a
$C_\alpha$-conjugate in $E(C)\cong 4L_3(4)$. Then as $L_3(4)$ has only
one class of involutions, $t$ is $C_\alpha$-conjugate into $O_2(M_1)$,
and then $C_\alpha$-conjugate to $z\in Z$. As $C_G(z)\le C_\alpha$,
the claim is proved. By the Bender-Suzuki Theorem, $C_\alpha=G$, so
$\alpha=1$ and we have proved
\finaldisplay{$\wt C(T)\le \wt C(M)=1$.}
\medskip

\begin{exnote}
Sims \cite{Sims:ON} constructed $O'N$ together with
a non-inner automorphism. See also \cite{Andrilli:ON,JansenWilson:ON}. 
\end{exnote}

\grp{Suz}[Suzuki:Suz]

$G$ is a rank $3$ permutation group of degree
$1782=2.3^4.11$ with point stabilizer $M\cong G_2(4)$. The subdegrees
($M$-orbitlengths) are $1+1365+416$, with the two-point stabilizer
corresponding to the $416$ suborbit being isomorphic to $J_2$. The
$1365$-orbit stabilizer in $M$ is a maximal parabolic subgroup $P$ of
$M$. This action of $G$ is primitive as $417$ does not divide $1782$.

$T_M:=M\cap T=C_T(U)$ where $U$ is the unique\footnote{Uniqueness is
visible in $C_G(Z)\cong 2^{1+6}_-O_6^-(2)$.} normal four-subgroup of
$T$. $N_M(U)=O^2(N_G(U))$. So $A=GN_A(T)$ and $N_G(T)$ normalizes
$N_G(U)$ and $N_M(U)$. Let $\beta\in N_A(U)$. If $M^\beta\ne M$, then
$M\cap M^\beta=N_M(U)$, which is a maximal parabolic subgroup of
$M$. Hence $M^\beta$ has an orbit of length $|M:N_M(U)|=1365$ on
$G/M$. The remaining $417$ points fall into one or two orbits since
$G$ has rank $3$ on both $G/M$ and $G/M^\beta$. The length of any
nontrivial orbit of $M^\beta$ of odd degree is a multiple of the index
$1365$ of any maximal parabolic subgroup of $M$, by Tits's
Lemma. Therefore $M^\beta$ has a fixed point on $G/M$, and so is
$G$-conjugate to $M$. So
$$
\wt A=\wt N(M).
$$
Obviously 
$$
\wt N(M)/\wt C(M)\le \Out(G_2(4))\cong Z_2.
$$

$\Out(M)$ is generated by the image of a field automorphism, and it
follows easily that $C_{\aut(M)}(P)=1$. Hence Lemma \ref{lemmathree} applies,
with condition (d2) holding, and we conclude that
$$
\wt C(M)=1.
$$

Finally, we claim that the two
maximal parabolic subgroups $P$ and $P_1$ of $M$ containing $T_U$ are
$C_A(T)$-invariant. We have $P=N_M(U)$ with $U$ the unique normal
four-subgroup of $T$, and by a standard commutator computation in
$T_U$, $P_1=N_M(Q_1)$ where $Q_1=Z_2(T_U)$. As $T_U=C_T(U)$, both
$X:=N_G(U)$ and $X_1:=N_G(Q_1)$ contain $T$. Also $Z\le U\cap Q_1$, so
since $F^*(C)=O_2(C)$, it follows that $Q_0:=F^*(X)=O_2(X)$ and
$Q_1:=F^*(X_1)=O_2(X_1)$. Therefore by Lemma \ref{lemmaone}e, $\alpha$
normalizes any overgroup of $Q_0$ in $X$ as well as any overgroup
of $Q_1$ in $X_1$. Hence to prove the claim, it is enough to show
that $Q_i\le T_U$, $i=0,1$, for then $Q_0\le P$ and $Q_1\le P_1$. 
Now for $i=0$ and $i$, $Z\le Z(T)\le Z(Q_i)$ and for a Cartan subgroup
$H$ of $N_M(T_U)$, $U=\<Z^H>\le Z(Q_i)$, whence $Q_i\le T_U$, as
desired, proving the claim. Therefore $C_A(T)$ normalizes
$\<P,P_1>=M$, and then acts on $M$ as a subgroup of
$C_{\aut(M)}(T_U)=\Aut_U(M)$. Hence 
\finaldisplay{$\wt C(T)\le \wt C(M)=1$.}
\medskip
\begin{exnote}
In $H=Co_1$, there is a subgroup $Y$ of order $3$ such
that $F^*(Y)\cong 3Suz$ and $|Y:F^*(Y)|=2$, so the bound $|\wt A|\le
2$ is sharp.
\end{exnote}

\grp{He}[Held:HeI,Held:HeII]

Here $M\in\Syl_5(G)$ with $M\cong E_{5^2}$. By a Frattini argument
$$
\wt A=\wt N(M).
$$
Moreover $\Aut_G(M)\cong SL_2(3)*Z_4$, so that $Z(\Aut(M))\le
\Aut_G(M)$ and the image $I$ of 
$\Aut_G(M)$ in $\Aut(M)/Z(\Aut(M))\cong PGL_2(5)$ is isomorphic to
$L_2(3)\cong A_4$. As $\Aut_G(M)\nor \Aut_A(M)$, we get 
$$
\wt N(M)/\wt C(M)\le N_{\Aut(M)/Z(\Aut(M))}(I)/I\cong Z_2.
$$
By Lemma \ref{lemmaone}b applied to $X=N_G(M)$,
$$
\wt C(M)=\wt C(N_G(M)).
$$

Now let $\alpha\in C_A(N_G(M))$ and set $G_0=C_G(\alpha)$.  For any $u\in
M^\#$, $E_u:=E(C_G(u))\cong A_5$ and $ME_u\nor N_G(\<u>)$. Then
$N_{E_u}(M\cap E_u)\le N_G(M)\le G_0$. But $C_{\aut
(E_u)}(N_{E_u}(M))=1$, so $E_u\le G_0$, and then $N_G(\<u>)\le
\<E_u,M>\le G_0$. Hence either $G_0=G$ or $G_0$ is strongly $5$-embedded in
$G$. Suppose the latter.

In $G$ there is a subgroup $H\cong Sp_4(4)\cdot2$, which we may take
to contain $M$, by conjugation. If $H\not\le G_0$, then $H\cap G_0$ is
strongly embedded in $H$, contradicting the Bender-Suzuki Theorem,
p.~20 of [GLS4]. So $H\le G_0$.

\noindent

Let $T\in\Syl_2(G)$ with $T_0:=T\cap H\in\Syl_2(H)$. Then $|T_0|=2^9$
so $|T:T_0|=2$. Let $S\in\Syl_3(H)$. Then $C_G(v)\cong 3A_7$
and $C_H(v)\cong Z_3\times A_5$ for all $v\in S^\#$. So
$C_{\aut(C_G(v))}(C_H(v))=1$ for all such $v$, whence $C_G(v)\le
G_0$. Now $|G:G_0|$ divides $2.7^2$ and
$|G:G_0|\equiv1\pmod{5^2}$, so $\alpha=1$ and 
$$
\wt C(N_G(M))=\wt 1.
$$

Finally suppose that $\beta\in C_A(T)$. Now, $T$ is
isomorphic to a Sylow $2$-subgroup of $L_5(2)$. Therefore
$T$ has exactly two subgroups $A_1,A_2$ isomorphic to
$E_{2^6}$. Moreover, $N_i:=N_G(A_i)$ satisfies
$F^*(N_i)=A_i$ and $N_i/A_i\cong 3A_6$; and $H=\<H\cap
N_1,H\cap N_2>$, as $H\cap N_1$ and $H\cap N_2$ contain
distinct maximal parabolic subgroups of $[H,H]$. By Lemma
\ref{lemmaone}b, \finaldisplay{$\wt C(T)\le \wt C(M)=\wt
  1$.}  \medskip

\begin{exnote}
$\Aut(He)$, of order $2.|He|$, is visible in a
$7$-local subgroup of $F_1$. 
\end{exnote}

\grp{Ly}[Lyons:LyIE,Sims:Ly]

$M=3Mc2$ is the normalizer of a subgroup $R$ of order $3$ whose
conjugacy class is uniquely determined by the isomorphism type of
$N_G(R)$. Moreover, $\Aut(M)=\Aut([M,M])$ so $\Out(M)=1$ by the
calculation for $Mc$. Therefore
$$
\wt A=\wt N(M)=\wt C(M).
$$
A straightforward analysis of $M\backslash G/M$ \cite{Lyons:LyIE} shows that
there are five double cosets of uniquely determined cardinalities, and
$M$ is maximal in $G$. Let $\alpha\in C_A(M)$ and take any involution
$t\in M$. Then $\alpha$ acts on $C_G(t)\cong 2A_{11}$ and centralizes
$C_M(t)=(E\times R)\<u>$ where $E\cong 2A_8$ and $u$ is an involution
inverting $R$. Therefore $\Aut(C_G(t))\cong \Sigma_{11}$ and
$C_{\aut(C_G(t))}(C_M(t))=1$. Consequently $\alpha$ centralizes
$C_G(t)$. So $G=\<M,C_G(t)>\le C_G(\alpha)$, $\alpha=1$, and
\finaldisplay{$\wt C(M)=\wt 1$.}

\grp{Ru}[Rudvalis:RuI,Rudvalis:RuII,ConwayWales:Ru]

By Sylow's Theorem $\wt A=\wt C(Z)$. Let $\alpha\in C_A(Z)$
set $N=M^\alpha$, and suppose that $N$ is not $G$-conjugate to $M$,
i.e., has no fixed points on $G/M$. In \cite{Rudvalis:RuI} this is shown to lead
to a character-theoretic contradiction\footnote{A contradiction is
also available by analyzing the orbit lengths of $N$ on $G/M$. By
construction $|MN/M|=1755$ and there must be a second orbit of length
$1755$, the only possibly odd length. These orbits account for all
fixed points of any $5$-element of $N$ on $G/M$. Remaining are $550$
points on which a Sylow $5$-subgroup of $N$ acts semiregularly. $N$
has a subgroup $N_0\cong L_2(5^2)$, which has no such action on $550$
points.} Therefore $M^\alpha\in M^G$ and
$$
\wt A=\wt N(M).
$$

Since $M\cong {^2}F_4(2)$ is complete, 
$$
\wt N(M)=\wt C(M).
$$
Let $\alpha\in C_A(M)$ and $R\in\Syl_3(M)$, and set
$E:=C_G(Z(R))$. Then $E\cong 3A_6$. But $C_{\aut(E)}(R)=1$ so
$[\alpha,E]=1$. $M$ is maximal in $G$ since $G$ has rank $3$ on $G/M$
with subdegrees $1$, $1755$ and $2304$. Also $C_M(Z(R))\cong SU_3(2)$
so $E\not\le M$. Therefore $\alpha=1$ and
$$
\wt C(M)=1.
$$
Let $\beta\in C_A(T)$. We have $|Z|=2$ and $F^*(C)=O_2(C)$, so $\beta$
centralizes $C$ by Lemma \ref{lemmaone}b. Likewise there is a unique $U\nor
T$ with $U\cong E_{2^2}$, and for the same reason $\beta$ acts on
$N_G(U)$ as conjugation by an element of $Z$. Hence some
$\beta'\in\{\beta,\beta z\}$ centralizes $\<C,N_G(U)>$. But
$M=\<C\cap M,N_M(U)>$, these being maximal (parabolic) subgroups of
$M$. Therefore 
\finaldisplay{$\wt C(T)=\wt N(M)=1$.}

\grp{Fi_{22}, Fi_{23}, Fi_{24}}[Fischer:ThreeTransp,Asch:ThreeTranspGps]

We include the nonsimple group $Fi_{24}$ since we use the fact that
$\Out(Fi_{24})=1$ in the calculation of $\Aut(Fi_{24}')$. 

We have $G=Fi_n$, $n=22$, $23$, $24$. Let $M(n)$ be the
corresponding set of $3$-transpositions. Define $Fi_{21}=U_6(2)$, also
a $3$-transposition group with respect to $M(21)$, the set of root
involutions in $Fi_{21}$. We have an exact sequence $1\to \<t>\to M\to
Fi_{n-1}\to 1$ with $t$ an involution in the $3$-transposition
class. Also $G$ has rank $3$ on $G/M$ with subdegrees
$3510=1+693+2816$, $31671=1+3510+28160$, $306936=1+31671+275264$. In
particular $M$ is maximal in $G$. The
class of $3$-transpositions is unique so
$$
\wt A=\wt N(M), \text{ and } \wt N(M)/\wt C(M)\le\Out(M)
$$
with $\Out(M)\cong Z_2$, $Z_2$, and $1$, respectively. 

We show inductively that $C_A(M)=\<t>$. For $n=21$, we take $M$ as the
stabilizer of a root involution, which is a maximal parabolic subgroup
in $Fi_{21}\cong U_6(2)$. Thus $C_A(M)=\<t>$ when $n=21$ (see 2.6.5e of
of \cite{I:A}). In general let $u\in M-\<t>$ be a $3$-transposition,

\noindent
so that $u=t^g$ for some $g\in G$. Let $\alpha\in C_A(M)$. Then
$\alpha$ centralizes $u$, acts on $M^g$ and centralizes $M\cap
M^g=C_{M^g}(t)$. By induction, $\alpha$ acts on $M^g/\<u>$ like an
element of $\<t>$. Consequently $\alpha$ centralizes $O^2(M^g)$. The
same holds for $\alpha t$, which also centralizes $M$. Therefore some
element of $\alpha\<t>$ centralizes $\<M,O^2(M^g)>=G$, so $\alpha\in \<t>$
as claimed.
In particular 
$$
\wt C(M)=1.
$$
Similarly we show that $C_A(T)\le G$. Indeed let $\beta\in C_A(T)$. 
Then $\beta$ centralizes $t$, acts on $M$, and inductively induces an
inner automorphism on $M/\<t>$. Hence for some $x\in M$,
$\beta':=\beta x$ centralizes $M/\<t>$, so centralizes
$[M,M]\<t>=M$. Thus $\beta\in \beta'G=G$, and 
$$
\wt C(T)=1.
$$
Now if $G=Fi_{23}$, then $G$ has there is also a unique class of
involutions $v$ such that $N:=C_G(v)\cong 2^2U_6(2).2$. Observe that
$v$ is $2$-central in $G$. Then $\wt A=\wt N(N)$, $\wt N(N)=\wt C(N)$
since $\Out(N)=1$, and $\wt C(N)\le \wt C(T)=1$. We conclude that
$|\wt A|\le 2$ for $n-22$ and $\wt A=1$ for $n=23,24$.

\qed
\medskip

\begin{exnote}
There is an involution $y\in Fi_{24}'$ such that
$C_{Fi_{24}'}(y)/\<y>$ is an extension of $Fi_{22}$ by a non-inner
automorphism. 
\end{exnote}

\grp{Fi_{24}'}[Fischer:ThreeTransp]

Since $|Fi_{24}:Fi_{24}'|=2$ and $Z(Fi_{24})=1$, the image $\wt A_0$ of $Fi_{24}$
in $\wt A$ is a group of order $2$. Since $Fi_{24}$ is
complete, as we have just seen, 
$$
\wt A_0=N_{\wt A}(\wt A_0).
$$
Thus $|\wt A|/2$ is odd. Let $\alpha\in A$
with $\wt A$ of odd order, and set $A_1=G\<\alpha>$. We show that $\wt
A_1=1$. By Sylow's Theorem, 
$$
\wt A_1=(N_{A_1}(C)){\wt\ }.
$$
Let $Q=F^*(C)$. Then $Q\cong 2^{1+12}_+$ and $F^*(C/Q)\cong
3U_4(3)$. Then $C/Q\le N_{\aut(Q)}(O_3(C/Q))\cong \Gamma U_6(2)$, the
product of $GU_6(2)$ with a graph automorphism. The corresponding
$6$-dimensional representation of $3U_4(3)$ over $\F_4$ is absolutely
irreducible, so $C/Q$ is self-centralizing in $\Aut(Q)$. As
$\Out(U_4(3))$ is a $2$-group and $\wt A_1$ has odd order, $\Aut_G(Q)$
is self-normalizing in $\Aut_{A_1}(Q)$. Therefore 
$$
(N_{A_1}(C)){\wt\ }=(C_{A_1}(C)){\wt\ }=(C_{A_1}(T)){\wt\ }
$$
as in Lemma \ref{lemmaone}e. We may therefore assume that $[\alpha,T]=1$. 

Choose a non-$2$-central involution $t\in T$ such that $t$ is extremal
in $T$. Then $F:=F^*(C_G(t))\cong 2Fi_{22}$ and $\alpha$ maps into
$O^2(C_{\aut(F)}(T\cap F))$. As $T\cap F\in\Syl_2(F)$ we conclude from
the $Fi_{22}$-calculation that $[\alpha,F]=1$ and then as
$|C_G(t):F|=2$, $[\alpha,C_G(t)]=1$.

Now $M\cong Fi_{23}$ with $T_M\in\Syl_2(M)$. We have
$C_{Fi_{24}}(M)=\<z>\cong Z_2$ with $z\not\in Fi_{24}'$. Also, there
exist involutions $t,u\in Z(T_M)$ such that $C_M(t)\cong 2Fi_{22}$ and
$C_M(u)\cong 2^2U_6(2)\cdot2$. Then $\alpha$ centralizes
$<C_M(t),C_M(u)>$, which equals $M$ as $C_M(t)$ is maximal in $M$. The
suborbits of $M$ on $G/M$ are the same as those of
$N_{Fi_{24}}(M)\cong Z_2\times M$ on $Fi_{24}/N_{Fi_{24}}(M)$, and in
particular $M$ is maximal in $G$. As $T\not\le M$, it follows that
$\alpha=1$ and \finaldisplay{$\wt C(T)=(C_{A_1}(C)){\wt\ }=\wt 1$.}

\grp{F_5}[Harada:F5,Harada:AutF5]

There exists a unique conjugacy class $C_3$ of elements $g\in G$ of order $3$ such that $C_G(g)=\<g>\times E(C_G(g))$, with
$E(C_G(g))\cong A_9$, and $N_G(\<g>)=C_G(g)\<u>$, with
$E(C_G(g))\<u>\cong \Sigma_9$.  There is an element $g_1\in C_3$ such
that $C_G(g)\le M$. Choose three disjoint $3$-cycles $g_2,g_3,g_4\in
E(C_G(g_1))$. Then $A=GN_A(\<g_1>)=GN$, where $N$ stabilizes the set
of four subgroups $\<g_i>$, $i=1,\dots,4$ and so normalizes
$M=\<E(C_G(g_i))\,|\, 1\le i\le 4>$. Thus
$$
\wt A=\wt N(M) \text{ and }\wt N(M)/\wt C(M)\le \Out(M)\cong Z_2.
$$
Let $\alpha\in C_A(M)$ and put $C=C_G(\alpha)$.  Let $f\in M$ be a
$5$-cycle. Then $\alpha$ centralizes $C_M(f)=\<f>\times H$, $H\cong
A_7$. From \cite{Atlas} or Table 5.3w of \cite{I:A},

\noindent
$F^*(C_G(f))=\<f>\times I$, $I\cong U_3(5)$. Thus $\alpha$ maps into
$C_{\aut(I)}(H)$, which, however, is trivial\footnote{One way to see
this is to observe by groups orders that $I=HB$, where $B$ is a Borel
subgroup of $I$ containing some Sylow $5$-subgroup of $H$. Note that
Sylow $2$-subgroups of $B$ are cyclic so $|HB|_2>|B|_2$. Since
$\alpha$ centralizes a $5$-element of $B$, $\alpha$ normalizes
$B$. But then $B\ge [\alpha,B]=[\alpha,HB]=[\alpha,I]\nor I$, so
$[\alpha,I]=1$.}. Hence $C\ge\<M,H>$. This implies that $|G:C|$ divides
$2^5.3.5^2.19$. 

Now $C$, like $M$, contains a subgroup $E$ of order $55$. Then
$O_2(C_G(E))\cong Z_2$ is $\alpha$-invariant and so lies in $C$. Hence
a Sylow $11$-normalizer in $C$ has index $1$ or $2$ in a Sylow
$11$-normalizer in $G$. Consequently $|G:C|\equiv1$ or
$2\pmod{11}$. Likewise $M$ contains a Sylow $7$-normalizer of $G$ and
so $|G:C|\equiv1\pmod7$. 

If $\alpha$ is a $5'$-element then it acts on $N_G(Z(R))$,
where $R\in\Syl_5(I)$. But $Z(R)$ is a Sylow $5$-center in
$G$ and $F^*(N_G(Z(R)))\cong 5^{1+4}$ contains $\<f>\times R$
with index $5$. Hence $\alpha$ must centralize
$F^*(N_G(Z(R)))$ and then $\alpha$ centralizes $N_G(Z(R))$,
by Lemma \ref{lemmaone}a. Thus in this case
$|G:C|\equiv1\pmod5$ as well, and $|G:C|$ divides
$2^5.3.19$. There are no such numbers $|G:C|>1$.

Therefore we may assume that $\alpha$ is a $5$-element. Expand
$Q\in\Syl_3(M)$ to $P\in\Syl_3(G)$, and let $N=N_G(Z(P))$. Then
$|P:Q|=3$ and $F^*(N)cong 3^{1+4}$. Hence $\alpha$
centralizes $F^*(N)$ and then $N$, by Lemma \ref{lemmaone}a. Now $|G:C|$
divides $2^5.5^2.19$ and $|G:C|\equiv1\pmod{21}$, and $|G:C|\equiv1$
or $2\pmod{11}$. Again this forces $|G:C|=1$, so 
\finaldisplay{$
\wt C(M)=1.
$}
\medskip

\begin{exnote}
The automorphism group $A=F_5.2$ is involved in $N_{F_1}(D)$,
where $D$ is a subgroup of $F_1$ of order $5$ and class $5A$
\cite{Atlas} or 5.3z in \cite{I:A}. 
\end{exnote}

\grp{F_3}[Thompson:F3]

Since $M=C$ and $T$ has one class of involutions, 
$$
\wt A=\wt N(M).
$$

There are subgroups $E\le D=N_G(E)$ with $E=C_G(E)\cong E_{2^5}$ and
$D/E\cong \Aut(E)$. 

Then $D$ contains a Sylow $2$-subgroup of $G$ and by replacing $D$
by a suitable conjugate we may assume that $Z\in E$, $D\cap M/E\cong
2^4.L_4(2)$ and $D\cap M/O_2(M)\cong A_8\cong L_4(2)$.

We have $F=O_2(M)\cong 2^{1+8}_+$. By Lemma \ref{lemmaone}d, the restriction
mapping $\Aut(M)\to \Aut(F)$ is injective. Furthermore, since $A_9$
contains a Frobenius group of order $9.8$, $M$ acts absolutely
irreducibly on $F/Z$. Therefore $\Aut(M)/\Aut_F(M)$ embeds in
$\Aut(M/F)\cong\Sigma_9$. However, for any $x\in M$ of order $3$
mapping onto a $3$-cycle $\ov x\in\ov M:=M/F\cong A_9$, we have $C_F(x)=Z$. If
$\Aut(M)/\Aut_F(M)\cong\Sigma_9$, then $\ov M$ contains a subgroup
$\<\ov t>\times \ov H$ with $\ov t$ an involution inverting $\ov x$
and $\ov H\cong \Sigma_7$. Since $\ov x$ is fixed-point-free on $F/Z$,
$\<\ov t>$ is free on $F/Z$ and by the $A\times B$-lemma, $\ov H$ acts
faithfully on $C_{F/Z}(\ov t)\cong E_{2^4}$, which is impossible as
$\Sigma_7$ does not embed in $L_4(2)\cong A_8$. Therefore
$\Aut(M)/\Aut_F(M)\cong A_9$ and 
$$
\wt N(M)=\wt C(M).
$$

We set $N:=\<C,D>$ and argue that $C_G(u)\le N$ for all involutions
$u\in N$, whence $N=G$ by the Bender-Suzuki Theorem (p.~20 of
\cite{GLS4}). Since $|G:C\cap D|$ is odd we may assume that $u\in
C\cap D$. Suppose first that $u\not\in F$. Then in $\ov C=C/F\cong
A_9$, $\ov u$ inverts $3$-cycle, which is fixed-point-free on $F/Z$,
so $u$ acts freely on $F/Z$. If $\ov u$ is a root involution then for
some element $v\in C$ of order $5$, $[\ov v,\ov u]=1$ and
$C_G(v)/O_5(C_G(v))\cong SL_2(3)$. Hence $C_G(v)$ contains an element
$w$ of order $4$ such that $\<w^2>=Z$ and $\ov w=\ov u$. By the free
action of $u$ on $F/Z$, $u$ is conjugate to an element of $\<w>$,
which is absurd as $u$ is an involution. The mapping $C/F\to \Aut(\ov
E)$ is an isomorphism, so $\ov u$ is a $2$-central involution in $\ov
C\cong A_9$. Now $\ov u$ is a transvection on $\ov E\cong E_{2^4}$. On
the other hand $\ov u$ acts freely on $\ov F$, as we saw
above. Therefore $|[\ov u,\ov F]\cap \ov E|=8$. But $u$ centralizes
$[u,F]\cap E$. It follows that $u$ induces a transvection on
$E$. Therefore $u$ is $D$-conjugate to an element of $O_2(D\cap
E)=F$. 

As $u^N\cap F\ne\emptyset$, we now may assume that $u\in F$. Since
$C\cap D/F\cong L_4(2)$ has a natural module and its dual as
composition factors on $F/Z$, $C\cap D$ has two orbits on the set of
involutions in $F-Z$: those in $E$, and the rest. But $C$ is
irreducible on $\ov F$ and hence transitive on the involutions of
$F-Z$. As $D$ is transitive on $E^\#$, it follows that $N$ has one
class of involutions; and then as $C\le N$ we conclude that $N=G$, as
asserted. 

Finally let $\alpha\in C_A(M)$. Then $\alpha$ centralizes $C\cap D$ so
normalizes $D$. By Lemma \ref{lemmaone}b, $\alpha\downarrow
D\in\Aut_Z(D)$. Hence for some $\beta\in\alpha Z$,
$C_G(\beta)\ge\<C,D>=N=G$. Therefore 
$\wt C(M)=1$.

\grp{F_2}[Atlas]
\vskip.1pt
Here $M=C_G(t)$, $F^*(M)\cong 2\,{^2}E_6(2)$, and
$|M:F^*(M)|=2$; moreover there exists an involution $u\in
M-F^*(M)$ such that $C_{F^*(M)}(u)\cong F_4(2)\times Z_2$. 
 Only the involutions in $t^G$ have
centralizers isomorphic to $M$, so
$$
\wt A=\wt N(M).
$$
Since $\Out({^2}E_6(2))\cong \Sigma_3$
\cite[Sec. 2.5]{I:A}, $M/\<t>$ is perfect. 

$$
\wt N(M)=\wt C([M,M]/\<t>).
$$
Let $\alpha\in C_A([M,M]/\<t>)$.  We argue first that
$[\alpha,M]=1$ and then that $[\alpha,G]=1$. 
Since $M/\<t>$ is centerless and $[M,M]$ is quasisimple,
$\alpha$ centralizes $M/\<t>$ and $[M,M]$. If $\alpha$ does
not centralize $M$, therefore, we must have
$u^\alpha=tu\in u^G$. However, from \cite[Table 5.3y]{I:A} we see
that of the involutions $u$ and $tu$, one lies in the class
$t^G$ and the other does not, belonging instead to class
$2C$. This contradiction shows that 
$$
\wt N(M)=\wt C(M).
$$

It also shows that we may take $u\in t^G$. Now let
$\alpha\in C(M)$. There is  $v\in M$ of order $3$ such that
$N:=N_{M/\<t>}(\<v>)\cong \Sigma_3\times U_6(2)\cdot2$
\cite[Table 7.3.4]{I:A}. Having a nonsolvable centralizer,
$v$ must belong to class $3A$ \cite[Table 5.3y]{I:A}, and so
$J:=N_G(\<v>)\cong \Sigma_3\times\Aut(Fi_{22})$. Therefore
$\alpha$ acts on $E(J)\cong Fi_{22}$ and centralizes both
$t$ and $E(C_{E(J)}(t))=N^{(\infty)}\cong U_6(2)$. In the
discussion of the case $G=Fi_{22}$, we have argued that (in
the terminology of the current case)
$C_{\aut(E(J))}(N^{(\infty)}\<t>)=\<t>$. Therefore replacing
$\alpha$ by $\alpha t$ if necessary, we may assume that
$[\alpha,E(J)]=1$. Now $t^{E(J)}$ is a class of
$3$-transpositions in $E(J)$. Hence there is $g\in E(J)$
such that $tt^g$ has order $3$ and $E(C_{E(J)}(tt^g))\ne
1$. In particular $C_G(tt^g)$ is nonsolvable, so
$N_G(\<tt^g>)\cong S_3\times\Aut(Fi_{22})$ \cite[Table 5.3y]{I:A}.

Finally consider $H:=C_G(\alpha)$. We have $M\le H$ and
$g\in E(J)\le H$. Then 
$$
\begin{aligned}
M\cap M^g&=C_G(\<t,t^g>)\cong
\Aut (Fi_{22}),\\
|H|&\ge
|MM^g|=\frac{|M|^2}{|M\cap M^g|}=\frac{(4|{^2}E_6(2)|)^2}{2|Fi_{22}|},\\
|G:H|&\le
\frac{|F_2||Fi_{22}|}{8|{^2}E_6(2)|^2}=\frac{3^45^423\cdot31\cdot47}{2^{17}7\cdot17\cdot19}\le6.\\
\end{aligned}
$$
As $G$ is simple and $|G|>6!$, $G=H$, and so
\finaldisplay{$\wt C(M)=1$.}


\grp{F_1}[Griess:F1]

Here $M=C$. By Sylow's Theorem, 
$$
\wt A=\wt N(M).
$$

The action $C/F^*(C)\cong Co_1$ on $F^*(C)/\<z>\cong \Lambda/2\Lambda$
is absolutely irreducible, and $H^1(C/F^*(C),\Lambda/2\Lambda)$ is
trivial by Corollary \ref{somehones}. Therefore by Lemma
\ref{lemmaone}b, 
$$
\wt N(M)=\wt C(M).
$$
In particular $\wt C(T)\le \wt N(M)=\wt C(M)$. 

Let $\alpha\in C(M)$. Let $t\in M$ be a non-$2$-central
involution which is extremal in $T$ and set $H:=C_G(t)$.
Then $\alpha$ acts on $H\cong 2F_2$, and $\alpha$
centralizes $H\cap T=C_T(t)\in\Syl_2(H)$. By the $F_2$
case, $[C_T(t),\alpha]=1$ implies that $[H,\alpha]=1$. 
Hence $C_G(\alpha)$ contains $M$ and $H$.

But it is well-known, and we give an elementary proof below,
that 
\begin{myeqn}
	\label{eq:ghm}
\<M,H>=G.
\end{myeqn}

Thus $\alpha$ will necessarily be trivial
and we will have proved
$$
\wt C(M)=1.
$$

Our argument that $\<M,H>=G$ begins by setting $G_0=\<M,H>$
and observing that $F^*(C_G(\<z,t'>))=F^*(C_M(t))$ is a
$2$-group for any involution $t'$ such that
$[z,t']=1$. Taking $t'=t^g$ we have $[z^{g\inv},t]=1$ and
$F^*\left(C_G\left(\langle z^{g\inv},t\rangle\right)\right)$
is a $2$-group. As $g$ varies, $z^{g\inv}$ varies over all
of $z^G\cap C_G(t)=z^G\cap H$. Therefore any $z'\in z^G\cap
H$ is $H$-conjugate to $z_1$ or $z_2$, these being conjugacy
class representatives such that $C_{H/\<t>}(z_1)\cong
2^{1+22}Co_2$ and $C_{H/<t>}(z_2)\cong 2^{9+16}O_8^+(2)$. To
pull back to $H$, notice that we may take $z_1=z$ in the
first case, and then $t\in O_2(C)$ so $t$ and $tz$ are
$C$-conjugate. In the second case, writing $z_2=z^{g\inv}$
and $t'=t^g\in C$, we have
$C_C(t')=C_G(\<t',z>)=C_G(\<t,z_2>^g)\cong
C_G(\<t,z_2>)=C_H(z_2)$, whence $t'\not\in O_2(C)$ and the
image of $t'$ in $C/O_2(C)\cong Co_1$ is a $2$-central
involution, with $C_{C/O_2(C)}(t'O_2(C))$ is an extension of
a $2$-group by just $\Omega_8^+(2)$. Consequently $z_2$ and
$z_2t$ are fused in $H$. We have proved that there are
exactly two $G$-orbits on the set $P_G$ of all pairs
$(z^h,t^k)$ such that $h,k\in G$ and $[z^h,t^k]=1$, and they
are represented by $(z,t)$ and $(z,t')$.

Since $C\le G_0$, it follows that there are exactly one
or two $G_0$-orbits on the set $P_H$ of all pairs
$(z^h,t^k)$ as above but with $h,k$ restricted to lie in
$G_0$; and representatives of these orbits are $(z,t)$
and (in the two-orbit case) $(z,t')$. We consider these
cases separately.

In the one-orbit case, $t'\not\in t^{G_0}$ and $t^G\cap
C\subseteq O_2(C)$. Therefore $t^G\cap (C\cap H)\subseteq
O_2(C)\cap H= O_2(C\cap H)$. Since $O_2(C\cap
H)/\<z,t>\cong E_{2^{22}}$ is acted on irreducibly by $C\cap
H/O_2(C\cap H)\cong Co_2$, and $t^C\not\le \<t,z>$, it
follows that $\<t^{G_0}\cap C\cap H>=O_2(C\cap H)$. On the
other hand, by the structure of $H/Z(H)\cong F_2$,
there is $t^*\in t^G\cap H$ such that in $C_G(\<t,t^*>)\cong
2^2{^2}E_6(2)$, long root involutions are $H$-conjugate to
$t$. The positive long root subgroups generate a $2$-group
isomorphic to a Sylow $2$-subgroup of $D_4(2)$, which is not
embeddable in the class $2$ group $O_2(C\cap H)$, a
contradiction. 

Therefore we must be in the two-orbit case, so $t^G\cap
C=t^{G_0}\cap C$, that is, $t^G\cap G_0=t^{G_0}$. From this
one quickly gets $t^x\in G_0\iff x\in G_0$ for all $x\in G$,
and one could argue that for any involution $y\in G_0$,
$y^x\in G_0\iff x\in G_0$. This would give $G_0=G$, as
desired, since $G_0$ cannot be strongly embedded in $G$ by
the Bender-Suzuki Theorem. We argue differently,
however. Using \cite[Table 5.3l]{I:A}, we see that in
$C/O_2(C)\cong Co_1$ there exists a subgroup $D\times W$ with $D\cong
D_{10}$ and $W\cong A_5\wr Z_2$; moreover, the only
involutions centralizing an isomorphic copy of $W$ are
$2$-central. We have seen that there is a conjugate $t'\in
t^G$ such that $t'\in C$ and the image of $t'$ in
$C/O_2(C)$ is $2$-central.  Hence using the Baer-Suzuki
theorem, $t'$ inverts an element of $G_0$ of order
$5$. Moreover, $t'\in G_0\cap t^G=t^{G_0}$. Thus, $t$
inverts some $f\in G_0$ of order $5$. Let $t''=tf$. Then in
the action of $G$ on $t^G\times t^G$, the stabilizer of
$(t,t'')$ is $G_{t,t''}=C_G(f,t)$. From \cite[Table
5.3z]{I:A} we see that $|G_{t,t''}|\le
|F_5|=2^{14}3^65^67.11.19$. Note that 
all $G$-conjugate pairs of the form $(t,t^*)$ are actually
$H=C_G(t)$-conjugate. Thus,
$$
\begin{aligned}
|G_0:H|&=|t^{G_0}|\ge|\{t^*\in t^{G_0}\,|\,(t,t^*)\in (t,t'')^G\}|\ge
|H|/|G_{t,t''}|\ge |H|/|F_5|,\\
|G:G_0|&\le
\frac{|G||F_5|}{|H|^2}=\frac{5^37^3.11.13.29.41.59.71}{2^{24}.17.23.31.47}<4.
\end{aligned}
$$

As $G$ is simple, $G=G_0$, as desired.

\bibliographystyle{amsplain}
\bibliography{mybib}

\end{document}